\documentclass{amsart}
\usepackage{amssymb,amsbsy, amsthm, amsmath,mathtools, amsfonts}
\usepackage{color}
\usepackage{bookmark}
\usepackage{tikz}
\usepackage{pgfplots}
\usepackage{amsrefs}

\usepackage{subcaption}

\pgfplotsset{width=7cm,compat=1.9}

\def\R{\mathbb{R}}
\def\N{\mathbb{N}}
\def\C{\mathbb{C}}

\theoremstyle{plain}
\newtheorem{theorem}{Theorem}[section]

\newtheorem{corollary}[theorem]{Corollary}

\newtheorem{proposition}[theorem]{Proposition}

\theoremstyle{definition}
\newtheorem{definition}[theorem]{Definition}
\newtheorem*{remark}{Remark}

\usepackage{hyperref}
\usepackage{physics}
\usepackage{amsrefs}

\begin{document}

\title[Chebyshev functions, multipoint Pad\'e approximants, and noise]{A remark on Chebyshev rational functions, multipoint Pad\'e approximants and Noise}

\author[M.~Derevyagin]{Maxim~Derevyagin}
\address{
MD,
Department of Mathematics\\
University of Connecticut\\
341 Mansfield Road, U-1009\\
Storrs, CT 06269-1009, USA}
\email{maksym.derevyagin@uconn.edu}

\author[M. Meynig]{Max Meynig}
\address{MM, Department of Physics, University of Connecticut, 196A Auditorium Road Unit 3046, Storrs, 06269, CT, United States}
\email{max.meynig@uconn.com}

 \subjclass{Primary 30E05, 33C47; Secondary 42C05, 30B70, 33F05.}
\keywords{Multipoint Pad\'e approximants, Chebyshev rational functions, recurrence relations, continued fractions}

\begin{abstract} 
Motivated by the recent interest in multipoint Padé approximants in the physics community, we discuss Chebyshev rational functions and show how they give rise to multipoint Padé approximants in exactly the same way that Chebyshev polynomials produce Padé approximants. We present recurrence relations for Chebyshev rational functions, as well as the underlying continued fraction of Thiele type (also known as $R_{II}$ type). Finally, we provide numerical evidence illustrating the effects of noise on this interpolation scheme and show that a phenomenon similar to that recently observed by Costin, Dunne, and Meynig for Padé approximants also occurs in the multipoint setting.
\end{abstract}
\maketitle

\section{Introduction}

Pad\'e approximants have numerous applications in mathematics and physics often due to their ability to analytically continue power series data beyond the radius of convergence.
Similarly, multipoint Pad\'e approximants, a type of rational interpolant, offer similar ability to analytically continue functions only known at a finite number of points and as a result have been investigated as a tool for calculations in theoretical physics.
At the same time there have been recent advances in computational mathematics, see \cites{Trefethen_AAA_2018,Trefethen_AAA_2024,Nakatsukasa_2026} which develop the AAA algorithm, an interpolation-based rational approximation with connections to multipoint Pad\'e approximation.
Returning to physics, multipoint Pad\'e approximants have been explored as a possible tool in Lattice gauge theory such as for the computation of the anomalous magnetic moment of the muon \cite{PadeGminusTwo}.
Calculations performed in Lattice gauge theory produce function values at a finite number of `imaginary times,' and analytic continuation to `real times' is a major challenge \cite{PhysRevD.108.074516}.
In this context multipoint Pad\'e approximants are particularly appealing due to physical constraints which imply the approximated functions are Nevanlinna \cite{PhysRevD.108.074516} or Stieltjes \cite{PadeGminusTwo} and thereby are convergent {with well characterized error (See \cite{PhysRevD.108.074516}*{Section V.} for a discussion of the Pick criteria and the `Wertevorrat' in the context of Lattice QCD calculations).}
As pointed out in \cite{PadeGminusTwo} the fact that the interpolated functions are only known to finite precision, with possible systematic errors for example those due to finite lattice spacing as discussed in \cite{PhysRevD.108.074516}*{Section IV.}, complicate the convergence.



Numerical error, which can be modeled as the presence of random noise in the input data, can dramatically affect the behavior of Pad\'e approximants.
How exactly noise effects Pad\'e approximants is an interesting mathematical question and has received attention. 
For example the work of Gilewicz and Pindor \cites{GILEWICZ1997199,GILEWICZ1999285} characterized the behavior of Pad\'e approximants to rational functions in the presence of random noise.
Additionally, the work of Bessis \cite{BESSIS199685} which explored the application of multipoint Pad\'e approximants to the fitting of experimental data. 
Specifically, in \cite{BESSIS199685} Bessis applied the Thiele algorithm to data coming from electron scattering experiments and proposed the use of Pad\'e for filtering noise.
The use of Pad\'e approximants in noise filtering has further been explored by Bessis and Perotti in \cite{Bessis_2009}.
 
More recently, the work of Costin, Dunne and Meynig in \cite{Costin:2022hgc} provided a characterization of the \textit{noise induced breakdown} of Pad\'e approximants, where the addition of a small random noise causes `spurious' poles and zeros, or Froissart doublets, to accumulate on the unit circle once a critical order $[n_c-1/n_c]$ is surpassed.
A fundamental result of \cite{Costin:2022hgc} is a prediction for this critical order.
The results of \cite{Costin:2022hgc} utilize the convergence in capacity of Pad\'e approximants which provides a precise connection between a Pad\'e approximant of a function and conformal maps.
This connection allows for properties of near diagonal Pad\'e approximants in the presence of noise to be understood through properties of the conformal map from the extremal domain to the unit circle.
Large order/small noise asymptotics reduce the critical order to a simple function completely determined by the conformal map.

Less is known about the behavior of multipoint Pad\'e approximants in the presence of noise.
However, similar convergence-in-capacity results are known for multipoint Pad\'e approximants \cites{StahlHerbert1989Otco,BARATCHART2009187}, which suggests that the results of \cite{Costin:2022hgc} extend straightforwardly.
To provide evidence for this numerical experiments of the behavior of multipoint Pad\'e approximants when a small random noise is added to the interpolation data are presented in section \ref{sec::EffectOfNoise}.

Additionally, while a substantial amount of results are known about the behavior and convergence of multipoint Pad\'e approximants (see \cites{Baker,BARATCHART2009187,D11, GoncharLopez1978,StahlHerbert1989Otco}, among others) there are relatively few examples (e.g. \cites{BW,Zhedanov2004,Zhedanov2004_Elliptic}) where multipoint Pad\'e approximants can be written down explicitly such as is possible in the series case.
Here we show that the Chebyshev rational functions introduced in \cite{Borwein1994} provide such an example and lead to an explicit construction of multipoint Pad\'e approximant.
Additionally, we provide several new results on these functions such as a novel three term recurrence relation as well as proofs of uniform convergence for some Newtonian and triangular sets of interpolation points.

\section{Chebyshev polynomials}
\label{sec:ChebyshevPolynomials}

The \emph{Chebyshev polynomials of the first kind} $\{T_n\}_{n=0}^{\infty}$ and
the \emph{second kind} $\{U_n\}_{n=0}^{\infty}$ are defined on $[-1,1]$ via the trigonometric relations
\begin{equation}\label{T_def}
  T_n(\cos\theta)=\cos(n\theta), \qquad n=0,1,2,\dots,
\end{equation}
\begin{equation}\label{U_def}
  U_n(\cos\theta) =\frac{\sin\!\big((n+1)\theta\big)}{\sin\theta},
  \qquad n=0,1,2,\dots
\end{equation}
One can easily check that
\[
  T_0(x)=1,\quad T_1(x)=x
\]
and 
\[  
  U_0(x)=1,\quad U_1(x)=2x.
\]
In addition, one can derive the three-term recurrence relation for $T_n$ and $U_n$ using  standard trigonometric identities. Namely, the cosine addition identity gives
\[
  \cos\big((n+1)\theta\big)=2\cos\theta\,\cos(n\theta)-\cos\big((n-1)\theta\big).
\]
Thus, setting $x=\cos\theta$ we get 
\begin{equation}\label{Tthreterm}
  T_{n+1}(x)=2x\,T_n(x)-T_{n-1}(x), \qquad n\ge1,
\end{equation}
with initial values $T_0(x)=1$ and $T_1(x)=x$, which also reflects the fact that $T_n$'s are indeed polynomials.

Similarly, writing the sine addition identity:
\[
  \sin\big((n+2)\theta\big)=2\cos\theta\,\sin\big((n+1)\theta\big)-\sin(n\theta).
\]
and dividing both sides by $\sin\theta$ yield 
\[
  \frac{\sin\big((n+2)\theta\big)}{\sin\theta}
  = 2\cos\theta\,\frac{\sin\big((n+1)\theta\big)}{\sin\theta}
    - \frac{\sin(n\theta)}{\sin\theta}.
\]
Again, setting $x=\cos\theta$ the latter can be rewritten as 
\begin{equation}\label{Qthreeterm}
  U_{n+1}(x)=2x\,U_n(x)-U_{n-1}(x), \qquad n\ge1,
\end{equation}
with initial values $U_0(x)=1$ and $U_1(x)=2x$.

To give a different perspective, let $z$ lie on the unit circle, and set the Joukowski transform
\[
x=\frac{1}{2}\!\left(z+{z}^{-1}\right) \in [-1,1], 
\qquad z=e^{it},\ \ x=\cos t.
\]
Then, for $x\in[-1,1]$ we have
\[
T_n(x)=\frac{z^n+z^{-n}}{2},
\qquad
U_n(x)=\frac{z^{n+1}-z^{-n-1}}{z-z^{-1}},
\]
which is simply another way to write formulas \eqref{T_def} and \eqref{U_def}.
Next, for $x\in\mathbb C\setminus[-1,1]$, let $z=z(x)$ be the unique solution of
\[
x=\frac12\!\left(z+z^{-1}\right)
\]
satisfying $|z|>1$ and so the function $z=z(x)$ is the inverse Joukowski map.  
We then define, for all $x\in\mathbb C$,
\[
T_n(x)=\frac{z(x)^n+z(x)^{-n}}{2},
\qquad
U_n(x)=\frac{z(x)^{n+1}-z(x)^{-n-1}}{z(x)-z(x)^{-1}},
\]
which generalize the previous formulas beyond the $x$ interval $[-1,1]$.
Note, the map $x\mapsto z(x)$ defined by
\[
x=\frac12\!\left(z+z^{-1}\right), \qquad |z|>1,
\]
is analytic on $\mathbb C\setminus[-1,1]$. With this choice, we may define
\[
\sqrt{x^2-1}=\frac12\!\left(z(x)-z(x)^{-1}\right),
\]
which gives the branch of $\sqrt{x^2-1}$ analytic on $\mathbb C\setminus[-1,1]$
satisfying $\sqrt{x^2-1}\sim x$ as $x\to\infty$.
In particular,
\[
z(x)=x+\sqrt{x^2-1},
\qquad
z(x)^{-1}=x-\sqrt{x^2-1}.
\]
The above formulas for $T_n$ and $U_n$ become
\[
\begin{split} 
T_n(x)=\frac{(x+\sqrt{x^2-1})^n+(x-\sqrt{x^2-1})^n}{2},
\\
U_n(x)=\frac{(x+\sqrt{x^2-1})^{n+1}-(x-\sqrt{x^2-1})^{n+1}}{2\sqrt{x^2-1}},
\end{split}
\]
valid for all $x\in\mathbb C\setminus[-1,1]$.

The last property we mention here is that the ratio $\dfrac{U_{n-1}(x)}{T_n(x)}$ is 
the $[n\!-\!1/n]$ Pad\'e approximants of 
\begin{equation}\label{eq::phi}
    \phi(x)=\frac{1}{\sqrt{x^2-1}}    
\end{equation}
at infinity, where the branch of the square root is fixed by the inverse Joukowski map.
To explain what it means observe that
\[
\phi(x)=\frac{2}{z-z^{-1}}
= z^{-1}+z^{-3}+z^{-5}+\cdots,
\quad z\to\infty
\]
and at the same time
\[
\frac{U_{n-1}(x)}{T_n(x)}
=\frac{2}{z-z^{-1}}\,
\frac{1-z^{-2n}}{1+z^{-2n}}
= z^{-1}+z^{-3}+\cdots+z^{-(2n-1)}
+O(z^{-(2n+1)}).
\]
Thus the Laurent expansions at infinity agree through order $2n-1$ and so we also have
\begin{equation}\label{ChebToPade}
\phi(x)-\frac{U_{n-1}(x)}{T_n(x)}=O(x^{-(2n+1)}),
\end{equation}
which identifies $\dfrac{U_{n-1}(x)}{T_n(x)}$  as the $[n\!-\!1/n]$ Pad\'e approximant of $f$.
Moreover, since $|z(x)|>1$ uniformly on compact subsets of
$\mathbb C\setminus[-1,1]$, the error $\left|\phi(x)-\frac{U_{n-1}(x)}{T_n(x)}\right|$ decays geometrically, giving locally uniform convergence. Another way to look at this approximation property is through continued fractions. Namely, it is known that numerators and denominators of the convergents to a continued fraction satisfy a three-term recurrence relation subject to two distinct initial conditions (for instance, see \cite{Baker}*{Section 4.1}). Therefore, \eqref{Tthreterm} and \eqref{Qthreeterm} allow us to reverse engineer the underlying continued fraction. In particular, we have that 
\begin{equation}\label{defCF}
\frac{U_{n-1}(x)}{T_n(x)}=\frac{1}
{x-
 \displaystyle{\frac{1} {2x-
\displaystyle{\frac{1}{\ddots-
\displaystyle{\frac{1}{2x}}}}}}}.
\end{equation}
As a result, \eqref{ChebToPade} means that the underlying continued fraction converges and 
\[
\phi(x)=\frac{1}
{x-
 \displaystyle{\frac{1} {2x-
\displaystyle{\frac{1}{2x-
\displaystyle{\frac{1}{2x-
\ddots}}}}}}}.
\]

\section{ The Chebyshev rational functions}

In this section, we will recast the construction of Chebyshev rational functions and present recurrence relations for these Chebyshev rational functions. The Chebyshev rational functions have prescribed poles and, hence, the essence of the construction is actually the numerators, which are polynomials. These polynomials appeared in some works of S.N. Bernstein and later studied by N.I. Akhiezer, see \cite{Akhiezer}*{Section A.5}. The form of rational functions was given by the construction in \cite{Borwein1994}, where they they showed that many properties of Chebyshev polynomials remained valid for the rational case.

 To begin with, let us fix a sequence of {\it distinct} real numbers $\{a_k\}_{k=1}^{\infty}$ such that
\[
a_k\notin[-1,1].
\]
Then define another sequence of real numbers via
\[ 
c_k = a_k - \sqrt{a_k^2-1}.
\]
Clearly, $|c_k|<1$. Equivalently, $a_k$ and $c_k$ are related by the Joukowski transformation
\[
a_k =\frac{1}{2}\left(c_k+\frac{1}{c_k}\right).
\]
Let $z$ lie on the unit circle, and set
\[
x=\frac{1}{2}\!\left(z+\frac{1}{z}\right) \in [-1,1], 
\qquad z=e^{it},\ \ x=\cos t.
\]
Define the finite Blaschke product
\[
f_n(z)=\prod_{k=1}^{n} \frac{z-c_k}{1-{c_k}\,z},\qquad f_0(z)\equiv 1.
\]
Now we can introduce the generalization of the Chebyshev polynomials in the following way: 
\begin{equation}\label{TQ_def}
f_n(e^{it})=T_n(\cos t)+i\,Q_n(\cos t)
\end{equation}
Notice that if all numbers $a_k=\infty$ the above-defined $T_n$ coincides with the Chebyshev polynomial $T_n$ defined earlier and $Q_n(\cos t)=U_n(\cos t)\sin t$. 
Next, we see that if $z=e^{it}$ one has $|f_n(z)|=1$ and therefore we have
\[
f_n(e^{it})=e^{\,i\theta_n(t)},\qquad 
\theta_n(t)=\sum_{k=1}^{n}\delta_k(t),
\]
where $\delta_k(t)$ is given by
\begin{equation}\label{deltak_def}
e^{\,i\delta_k(t)}=\frac{e^{it}-c_k}{1-{c_k}\,e^{it}}.
\end{equation}
Consequently, we arrive at
\begin{equation}\label{TQ_sum}
T_n(\cos t)=\cos\theta_n(t),\qquad 
Q_n(\cos t)=\sin\theta_n(t).
\end{equation}
Next, using \eqref{TQ_sum} and the standard trigonometric relations, we obtain
\begin{equation}\label{TQ_system}
\begin{aligned}
T_{n+1}(\cos t)&=T_n(\cos t)\,\cos\delta_{n+1}(t)\;-\;Q_n(\cos t)\,\sin\delta_{n+1}(t),\\
Q_{n+1}(\cos t)&=T_n(\cos t)\,\sin\delta_{n+1}(t)\;+\;Q_n(\cos t)\,\cos\delta_{n+1}(t).
\end{aligned}
\end{equation}
It is a bit more complicated than in the case of the Chebyshev polynomials but it still allows us to land on a three term recurrence relation in this generalized case.
\begin{theorem} The functions $T_n(x)$'s are rational and they satisfy the three-term recurrence relation:
\begin{equation}\label{3termForT}
\begin{split}
\frac{x-a_{n+1}}{\sqrt{a_{n+1}^2-1}}T_{n+1}(x)+
\Big[\left(\frac{a_n}{\sqrt{a_n^2-1}}+\frac{a_{n+1}}{\sqrt{a_{n+1}^2-1}}\right)x\\
-\frac{1}{\sqrt{a_n^2-1}}-\frac{1}{\sqrt{a_{n+1}^2-1}}
\Big]T_n(x)+\frac{x-a_{n}}{\sqrt{a_{n}^2-1}}T_{n-1}(x)=0.
\end{split}
\end{equation}
\end{theorem}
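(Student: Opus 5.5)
The plan is to make the system \eqref{TQ_system} explicit in $x$ by computing $\cos\delta_k(t)$ and $\sin\delta_k(t)$ from \eqref{deltak_def}, and then to eliminate $Q_n$ between two consecutive steps of that system. Write $z=e^{it}$. Multiplying numerator and denominator of $(z-c_k)/(1-c_kz)$ by $1-c_k\bar z$ gives
\[
e^{i\delta_k(t)}=\frac{(1+c_k^2)\cos t-2c_k+i(1-c_k^2)\sin t}{1+c_k^2-2c_k\cos t}.
\]
Since $c_k$ is real, the real and imaginary parts can be read off directly. I then use the Joukowski relations $1+c_k^2=2a_kc_k$ and $\tfrac12(c_k^{-1}-c_k)=\sqrt{a_k^2-1}$; the latter holds because $c_k^{-1}=a_k+\sqrt{a_k^2-1}$. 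With $x=\cos t$ this yields
\[
\cos\delta_k=\frac{a_kx-1}{a_k-x},\qquad \sin\delta_k=\frac{\sqrt{a_k^2-1}\,\sin t}{a_k-x}.
\]

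Rationality follows by induction from \eqref{TQ_system}, with the hypothesis that $T_n(x)$ and $Q_n(x)/\sqrt{1-x^2}$ are rational functions of $x$. This holds for $n=0$, since $T_0=1$ and $Q_0=0$. For the inductive step, note that the only product of the two irrational factors $\sin t$ that appears is $Q_n\sin\delta_{n+1}$. That product equals $(1-x^2)$ times a rational function, so it is rational. Hence $T_{n+1}$ is rational, and $Q_{n+1}$ is again $\sin t$ times a rational function.

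For the recurrence I invert the $n$-th step of \eqref{TQ_system}, which is a rotation by $\delta_n$, to get
\[
T_{n-1}=T_n\cos\delta_n+Q_n\sin\delta_n .
\]
The $(n+1)$-st step gives
\[
T_{n+1}=T_n\cos\delta_{n+1}-Q_n\sin\delta_{n+1}.
\]
Multiplying the first identity by $\sin\delta_{n+1}$ and the second by $\sin\delta_n$, and adding, eliminates $Q_n$:
\[
\sin\delta_n\,T_{n+1}+\sin\delta_{n+1}\,T_{n-1}=T_n\bigl(\sin\delta_n\cos\delta_{n+1}+\cos\delta_n\sin\delta_{n+1}\bigr).
\]
Next I substitute the explicit formulas and multiply by
\[
\frac{(a_n-x)(a_{n+1}-x)}{\sqrt{a_n^2-1}\,\sqrt{a_{n+1}^2-1}\,\sin t}.
\]
This is the step where the common factor $\sin t$ cancels. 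The identity becomes
\[
\frac{a_{n+1}-x}{\sqrt{a_{n+1}^2-1}}T_{n+1}+\frac{a_n-x}{\sqrt{a_n^2-1}}T_{n-1}=\Bigl(\frac{a_{n+1}x-1}{\sqrt{a_{n+1}^2-1}}+\frac{a_nx-1}{\sqrt{a_n^2-1}}\Bigr)T_n .
\]
Changing signs, this is exactly \eqref{3termForT}.

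The resulting identity is first established for $x\in(-1,1)$, where $\sin t\neq0$. Both sides are rational functions of $x$, so it extends to all of $\mathbb C$ by the identity theorem, away from poles.

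The main obstacle, though mild, is the bookkeeping in the first step: getting the correct sign of $\sqrt{a_k^2-1}$ from the choice $c_k=a_k-\sqrt{a_k^2-1}$. With this choice, $\tfrac12(c_k^{-1}-c_k)=+\sqrt{a_k^2-1}$ for $a_k>1$, and for $a_k<-1$ the same branch convention must be checked. I would verify the formulas against the limit $a_k\to\infty$, where $\cos\delta_k\to x$ and $\sin\delta_k\to\sin t$, so that \eqref{3termForT} reduces to \eqref{Tthreterm}.
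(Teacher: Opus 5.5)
Your proposal is correct and follows essentially the paper's route: eliminate $Q_n$ from the rotation system \eqref{TQ_system} to reach $\frac{1}{\sin\delta_{n+1}}T_{n+1}-\left(\frac{\cos\delta_n}{\sin\delta_n}+\frac{\cos\delta_{n+1}}{\sin\delta_{n+1}}\right)T_n+\frac{1}{\sin\delta_n}T_{n-1}=0$, then substitute the explicit formulas \eqref{TUzero} and the Joukowski identities for $c_k$. The remaining differences (inverting the $n$-th rotation directly, and proving rationality by induction on the system rather than, as the paper does, from the recurrence with rational initial data) are cosmetic; your branch concern is also moot, since $c_k^{-1}=a_k+\sqrt{a_k^2-1}$ holds for whichever root $s$ of $s^2=a_k^2-1$ defines $c_k$, because $(a_k-s)(a_k+s)=1$.
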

\begin{proof}
First, multiplying the first equation in \eqref{TQ_system} by $\cos\delta_{n+1}(t)$ and the second one by $\sin\delta_{n+1}(t)$, and adding the resulting relations give
\[
\cos\delta_{n+1}(t)T_{n+1}(\cos t)+\sin\delta_{n+1}(t)Q_{n+1}(\cos t)=T_n(\cos t).
\] 
The latter yields the following
\begin{equation*}
\begin{aligned}
\frac{\cos\delta_{n+1}(t)}{\sin\delta_{n+1}(t)}T_{n+1}(\cos t)+Q_{n+1}(\cos t)&=\frac{1}{\sin\delta_{n+1}(t)}T_n(\cos t),\\
\frac{\cos\delta_{n}(t)\cos\delta_{n+1}(t)}{\sin\delta_{n}(t)}T_{n}(\cos t)+\cos\delta_{n+1}(t)Q_{n}(\cos t)&=\frac{\cos\delta_{n+1}(t)}{\sin\delta_{n}(t)}T_{n-1}(\cos t).
\end{aligned}
\end{equation*}
Next, subtracting the second relation from the first one and taking into account the second one from \eqref{TQ_system}, we arrive at
\[
\begin{split}\frac{\cos\delta_{n+1}(t)}{\sin\delta_{n+1}(t)}T_{n+1}(\cos t)+
\left(\sin\delta_{n+1}(t)-\frac{\cos\delta_{n}(t)\cos\delta_{n+1}(t)}{\sin\delta_{n}(t)}-\frac{1}{\sin\delta_{n+1}(t)}\right)T_n(\cos t)\\
+\frac{\cos\delta_{n+1}(t)}{\sin\delta_{n}(t)}T_{n-1}(\cos t)=0.
\end{split}
\]
Since
\[
\sin\delta_{n+1}(t)-\frac{1}{\sin\delta_{n+1}(t)}=-\frac{\cos^2\delta_{n+1}(t)}{\sin\delta_{n+1}(t)},
\]
the above recurrence relation reduces to
\[
\frac{1}{\sin\delta_{n+1}(t)}T_{n+1}(\cos t)
-\left(\frac{\cos\delta_{n}(t)}{\sin\delta_{n}(t)}+\frac{\cos\delta_{n+1}(t)}{\sin\delta_{n+1}(t)}\right)T_n(\cos t)
+\frac{1}{\sin\delta_{n}(t)}T_{n-1}(\cos t)=0.
\]
Now, from \eqref{deltak_def} we see that
\begin{equation}\label{TUzero}
\cos\delta_{k}(t)+i\sin\delta_{k}(t)=\frac{(1+c_k^2)\cos t-2c_k}{1-2c_k\cos t+c_k^2}+i\frac{(1-c_k^2)\sin t}{1-2c_k\cos t+c_k^2}.
\end{equation}
As a result, we have 
\[
\begin{split}
\frac{1+c_{n+1}^2-2c_{n+1}x}{1-c_{n+1}^2}T_{n+1}(x)-
\left[\left(\frac{1+c_n^2}{1-c_n^2}+\frac{1+c_{n+1}^2}{1-c_{n+1}^2}\right)x-
\left(\frac{2c_n}{1-c_n^2}+\frac{2c_{n+1}}{1-c_{n+1}^2}\right)
\right]T_n(x)\\+\frac{1+c_{n}^2-2c_{n}x}{1-c_{n}^2}T_{n-1}(x)=0,
\end{split}
\]
where $x=\cos t$. Notice that in addition to
\[
\frac{1}{2}\left(c_k+\frac{1}{c_k}\right)=a_k
\]
we also have that
\[
\frac{1}{2}\left(c_k-\frac{1}{c_k}\right)=- \sqrt{a_k^2-1},
\]
which allows us to rewrite the three term recurrence relation in terms of $a_k$ 
as in the statement of theorem. 
%
\end{proof}
\begin{remark}
The recurrence coefficients for Chebyshev polynomials play a role of the reference coefficients in the spectral theory of Jacobi matrices. A spectral theory of linear pencils of Jacobi matrices related to interpolation problems was developed in \cites{DZh,D11,D17}. The coefficients in \eqref{3termForT} could play a similar role of reference coefficients. In particular, they give an insight as to how the coefficients behave and what to expect in the possible formulation of an analog of the Denisov-Rakhmanov theorem in the linear pencil case. Note that an analog of the Denisov-Rakhmanov theorem was proved in \cite{D17}, but it was in the case corresponding to the multiple interpolation at $\pm i$. 
\end{remark}

Mimicking the polynomial case, let us introduce the Chebyshev functions of the second kind as follows
\[
U_n(\cos t)=\frac{\sin\theta_{n+1}(t)}{\sin t}=\frac{Q_{n+1}(\cos t)}{\sin t}.
\]
Similarly to what we did for $T_n$'s, we can establish the following property.

\begin{theorem} The functions $U_n$'s are rational and they satisfy the three-term recurrence relation:
\begin{equation}\label{3termForU}
\begin{split}
\frac{x-a_{n+1}}{\sqrt{a_{n+1}^2-1}}U_{n}(x)+
\left[\left(\frac{a_n}{\sqrt{a_n^2-1}}+\frac{a_{n+1}}{\sqrt{a_{n+1}^2-1}}\right)x-
\frac{1}{\sqrt{a_n^2-1}}-\frac{1}{\sqrt{a_{n+1}^2-1}}
\right]U_{n-1}(x)\\+\frac{x-a_{n}}{\sqrt{a_{n}^2-1}}U_{n-2}(x)=0.
\end{split}
\end{equation}
\end{theorem}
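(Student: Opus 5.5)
The plan is to repeat the argument used for the $T_n$ recurrence, this time eliminating $T$ instead of $Q$. Then divide by $\sin t$.

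\emph{Step 1: a relation expressing $T$ in terms of $Q$.} Multiply the first equation in \eqref{TQ_system} by $-\sin\delta_{n+1}(t)$ and the second by $\cos\delta_{n+1}(t)$, and add. This gives
\[
-\sin\delta_{n+1}(t)T_{n+1}(\cos t)+\cos\delta_{n+1}(t)Q_{n+1}(\cos t)=Q_n(\cos t).
\]
Dividing by $\sin\delta_{n+1}(t)$ gives
\[
T_{n+1}=\frac{\cos\delta_{n+1}}{\sin\delta_{n+1}}Q_{n+1}-\frac{1}{\sin\delta_{n+1}}Q_n .
\]
Lowering the index by one gives the analogous expression for $T_n$.

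\emph{Step 2: a three-term relation for $Q$.} Substitute this expression for $T_n$ into the second equation of \eqref{TQ_system} with index $n$, that is, $Q_{n+1}=T_n\sin\delta_{n+1}+Q_n\cos\delta_{n+1}$. This yields
\[
Q_{n+1}=\sin\delta_{n+1}\Big(\tfrac{\cos\delta_n}{\sin\delta_n}Q_n-\tfrac{1}{\sin\delta_n}Q_{n-1}\Big)+\cos\delta_{n+1}Q_n .
\]
Now divide by $\sin\delta_{n+1}$. The coefficient of $Q_n$ becomes $\cot\delta_n+\cot\delta_{n+1}$, and we get
\[
\frac{1}{\sin\delta_{n+1}}Q_{n+1}-\left(\cot\delta_n+\cot\delta_{n+1}\right)Q_n+\frac{1}{\sin\delta_n}Q_{n-1}=0.
\]
This has exactly the same coefficients as the trigonometric recurrence obtained for $T_n$ in the previous proof. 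No analogue of the identity $\sin\delta-1/\sin\delta=-\cos^2\delta/\sin\delta$ is even needed here.

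\emph{Step 3: translate to $x$.} Divide the relation of Step 2 by $\sin t$. Since $U_{k-1}(\cos t)=Q_k(\cos t)/\sin t$, we obtain
\[
\frac{1}{\sin\delta_{n+1}}U_n-\left(\cot\delta_n+\cot\delta_{n+1}\right)U_{n-1}+\frac{1}{\sin\delta_n}U_{n-2}=0.
\]
Next, use \eqref{TUzero}, exactly as in the proof for $T_n$, to write $1/\sin\delta_k$ and $\cot\delta_k$ in terms of $x=\cos t$. The factors $\sin t$ appearing in these expressions cancel after multiplying through by $\sin t$. The resulting coefficients are literally the same as those in the $c_k$-form recurrence for $T_n$. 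Rewriting them via $a_k$ and $\sqrt{a_k^2-1}$ then gives \eqref{3termForU}.

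\emph{Step 4: rationality.} From \eqref{TUzero}, $U_0(\cos t)=\sin\delta_1/\sin t=(1-c_1^2)/(1-2c_1x+c_1^2)$, which is rational in $x$. For $U_1$, use $\sin\theta_2=\sin\delta_1\cos\delta_2+\cos\delta_1\sin\delta_2$. Each term contains exactly one factor $\sin t$, and $\cos\delta_k$ is rational in $x$, so $U_1$ is rational. Induction via \eqref{3termForU}, whose leading coefficient is nonzero as a rational function, then gives rationality of every $U_n$. The identity extends from $[-1,1]$ to all $x$ by analytic continuation.

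\emph{Expected obstacle.} The only delicate point is the bookkeeping of the $\sin t$ factors in Step 3, together with the base cases for the induction. I would handle this by verifying the $n=1$ case of \eqref{3termForU} explicitly, with $U_{-1}=Q_0/\sin t=0$, which provides a consistent start.
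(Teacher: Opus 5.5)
Your proposal is correct and is essentially the paper's proof, which says only to rerun the $T_n$ argument eliminating $T$ instead, reaching the same trigonometric recurrence $\frac{1}{\sin\delta_{n+1}}Q_{n+1}-(\cot\delta_n+\cot\delta_{n+1})Q_n+\frac{1}{\sin\delta_n}Q_{n-1}=0$ and then dividing by $\sin t$; your direct substitution into the second line of \eqref{TQ_system} makes that step explicit and avoids the $\cos^2\delta$ identity. Your starting values $U_{-1}=0$ and $U_0=-\sqrt{a_1^2-1}/(x-a_1)$ for the rationality induction match the definition $U_n(\cos t)=Q_{n+1}(\cos t)/\sin t$, whereas the paper's subsequent remark lists these same two functions under the labels $U_0,U_1$.
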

\begin{proof} Using the same steps but expressing $Q_n$ instead of $T_n$ we get
\[
\frac{1}{\sin\delta_{n+1}(t)}Q_{n+1}(\cos t)
-\left(\frac{\cos\delta_{n}(t)}{\sin\delta_{n}(t)}+\frac{\cos\delta_{n+1}(t)}{\sin\delta_{n+1}(t)}\right)Q_n(\cos t)
+\frac{1}{\sin\delta_{n}(t)}Q_{n-1}(\cos t)=0.
\]
\end{proof}

\begin{remark}
Since $f_0=1$, it is evident that
\[
T_0(x)=1, \quad U_0(x)=0.
\]
Then recall that $T_1(\cos t)+i Q_1(\cos t)=e^{i\delta_1(t)}$ and thus using \eqref{TUzero} yields
\[
T_1(x)=\frac{1-a_1x}{x-a_1}, \quad U_1(x)=- \frac{\sqrt{a_1^2-1}}{x-a_1}.
\]
These initial conditions along with the recurrence relations show that $T_n$ and $U_n$ are rational functions.
\end{remark}

The above remark and the basic theory of continued fractions lead to the following.
\begin{corollary} We have that
\begin{equation}\label{ratCF}
\frac{U_{n-1}(x)}{T_n(x)}=\frac{1}
{\frac{a_1x-1}{\sqrt{a_1^2-1}}-
 \displaystyle{\frac{(x-a_1)^2/(a_1^2-1)} {\frac{a_1x-1}{\sqrt{a_1^2-1}}+\frac{a_2x-1}{\sqrt{a_2^2-1}}-
\displaystyle{\frac{(x-a_2)^2/(a_2^2-1)}{\ddots-
\displaystyle{\frac{(x-a_{n-1})^2/(a_{n-1}^2-1)}{\frac{a_{n-1}x-1}{\sqrt{a_{n-1}^2-1}}+\frac{a_nx-1}{\sqrt{a_n^2-1}}}}}}}}}.
\end{equation}
\end{corollary}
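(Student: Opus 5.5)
The plan is to normalize the recurrence \eqref{3termForT} into the standard monic form $y_{k+1}=\beta_k y_k-\alpha_k^2y_{k-1}$. We then observe that the numerators (coming from \eqref{3termForU}) and the denominators (the $T_n$) satisfy this same recurrence with the two canonical initial conditions. The corollary then follows from the fundamental recurrence formulas for convergents of a continued fraction (cf. \cite{Baker}*{Section 4.1}), exactly as \eqref{defCF} was obtained from \eqref{Tthreterm} and \eqref{Qthreeterm}.

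First I introduce the abbreviations
\[
\alpha_k=\frac{x-a_k}{\sqrt{a_k^2-1}},\qquad b_k=\frac{a_kx-1}{\sqrt{a_k^2-1}},\qquad \beta_k=b_k+b_{k+1}.
\]
The bracket in \eqref{3termForT} is exactly $\beta_n$, so the recurrence reads
\[
\alpha_{n+1}T_{n+1}+\beta_nT_n+\alpha_nT_{n-1}=0 .
\]
I rescale by setting $\widetilde T_n=(-1)^n\alpha_1\cdots\alpha_nT_n$. Multiplying the recurrence by $(-1)^{n+1}\alpha_1\cdots\alpha_n$ gives
\[
\widetilde T_{n+1}=\beta_n\widetilde T_n-\alpha_n^2\widetilde T_{n-1}.
\]
The initial values are $\widetilde T_0=1$ and $\widetilde T_1=-\alpha_1T_1=b_1$, using $T_1=(1-a_1x)/(x-a_1)$ from the Remark.

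Next I do the same for the numerators. The proof of \eqref{3termForU} shows that $Q_n(\cos t)$ obeys the same recurrence as $T_n$. Hence $Q_n/\sin t$, which is the function that appears as the numerator $U_{n-1}$ of the ratio in \eqref{ratCF}, also obeys it. I set $W_n=(-1)^n\alpha_1\cdots\alpha_n\,Q_n/\sin t$, which then satisfies the same monic recurrence. Its initial values are $W_0=0$ (since $f_0=1$) and $W_1=-\alpha_1\cdot(-1/\alpha_1)=1$, using $U_1=-\sqrt{a_1^2-1}/(x-a_1)=-1/\alpha_1$ from the Remark.

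Thus $W_n$ and $\widetilde T_n$ are exactly the $n$-th numerator and denominator of the continued fraction
\[
\cfrac{1}{b_1-\cfrac{\alpha_1^2}{\beta_1-\cfrac{\alpha_2^2}{\ddots-\cfrac{\alpha_{n-1}^2}{\beta_{n-1}}}}},
\]
by the standard Wallis-type induction. Since the normalizing factors $(-1)^n\alpha_1\cdots\alpha_n$ cancel in the quotient, we get $W_n/\widetilde T_n=U_{n-1}/T_n$. Substituting $\alpha_k^2=(x-a_k)^2/(a_k^2-1)$ and $\beta_k=b_k+b_{k+1}$ gives \eqref{ratCF}.

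The main obstacle is bookkeeping of indices rather than analysis. The Remark lists both $U_0=0$ and $U_1=-\sqrt{a_1^2-1}/(x-a_1)$, which matches the indexing $Q_n/\sin t$ rather than the literal definition $U_n=Q_{n+1}/\sin t$. I would therefore state the corollary's numerator explicitly as $Q_n(\cos t)/\sin t$ and check the case $n=1$ directly: there the continued fraction is $1/b_1$, and indeed $-\tfrac{\sqrt{a_1^2-1}}{x-a_1}\big/\tfrac{1-a_1x}{x-a_1}=1/b_1$. I would also verify that the recurrence for $Q_n$ holds starting from the index $n=1$, where $Q_0=0$ enters.
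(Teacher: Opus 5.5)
Your proposal is correct and takes the paper's route. The paper just invokes the Remark's initial values together with the standard recurrences for continued-fraction convergents, and your rescaling by $(-1)^n\alpha_1\cdots\alpha_n$ is exactly the normalization that puts \eqref{3termForT} in Wallis form and produces the partial numerators $(x-a_k)^2/(a_k^2-1)$. Your point about the index shift is also right: the Remark's values $U_0=0$, $U_1=-\sqrt{a_1^2-1}/(x-a_1)$ belong to $Q_n/\sin t$, while the numerator in \eqref{ratCF} is the literally defined $U_{n-1}=Q_n/\sin t$, which is what you use.
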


The continued fraction \eqref{ratCF} degenerates to \eqref{defCF} when all nodes $a_k\to\infty$. Also, the continued fraction \eqref{ratCF} looks like an even or odd part of a Thiele interpolating fraction. These type of continued fraction was studied by M. Ismail and D. Masson \cite{IsmailMasson} in relation to orthogonality and they called them continued fractions of type $R_{II}$.

\section{The case of complex poles}
\label{sec::ComplexPoles}

In this section, we extend the previous construction to the case of complex poles. 
It is worth noting that although our extension is similar to what was done in \cite{Borwein1994}, it is slightly different. This means that some formulas remain valid while others require a modification or an adjustment.

Now we assume that $a_k$'s are distinct and   
\[
a_{k}\in \C\setminus [-1,1], \quad k=1, 2,\dots.
\] 
Then we can define the Chebyshev functions $T_n$ and $U_n$ as follows
\begin{equation}\label{ChebRat_c}
    \begin{aligned}
        T_n(x) &= \frac{1}{2}\left(f_n(z)+ \frac{1}{f_n(z)} \right),
     \\ U_n(x) &= \frac{1}{z-z^{-1}} \left(f_{n+1}(z)-\frac{1}{f_{n+1}(z)}  \right).
    \end{aligned}
\end{equation}
The latter reduces to the previous definition when $a_{k}\in\R$. Moreover, we have the following

\begin{theorem}\label{thm::RecurrenceRelation}
    The functions \eqref{ChebRat_c} satisfy the following recurrence relation:
     \begin{equation}
        \frac{x-a_{n+1}}{\sqrt{a_{n+1}^2-1}} u_{n+1}(x) + \left( \frac{ a_{n+1}x -1 }{\sqrt{a_{n+1}^2-1}} +\frac{ a_{n}x -1 }{\sqrt{a_{n}^2-1}}  \right) u_n(x) +\frac{x-a_{n}}{\sqrt{a_{n}^2-1}} u_{n-1}(x)=0
   \end{equation}
 Moreover, a solution $u_n$ to the above recurrence relation becomes $T_n$ if we impose the initial conditions
\[
u_0=1,\quad u_1=\frac{1-a_1x}{x-a_1}
\]
and $u_n$ subject to the initial conditions
\[
u_{0}=0,\quad u_1=-\frac{\sqrt{a_1^2-1}}{x-a_1}
\]
coincides with $U_n$.
\end{theorem}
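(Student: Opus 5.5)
The plan is to work in the Joukowski variable $z$, with $x=\frac12(z+z^{-1})$, and to show that the two sequences $f_n(z)$ and $f_n(z)^{-1}$ are both solutions of the recurrence. Then $T_n$, a fixed linear combination of them, is a solution as well, and so is the $U$-sequence after accounting for the index issue discussed below. The initial conditions then pin down which solution we get.

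Throughout, write $s_k=\sqrt{a_k^2-1}$ for the branch compatible with $c_k=a_k-s_k$, so that $s_k=\frac{1-c_k^2}{2c_k}$ and $a_k=\frac{1+c_k^2}{2c_k}$. Also set
\[
b_k(z)=\frac{z-c_k}{1-c_kz},\qquad \alpha_k=\frac{x-a_k}{s_k},\qquad \beta_k=\frac{a_kx-1}{s_k},
\]
so that $f_{n}=f_{n-1}b_n$.

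The key local step is a pair of identities, which are the complex-variable form of \eqref{TUzero}:
\[
\alpha_k\,\frac{b_k+b_k^{-1}}{2}=-\beta_k,\qquad \alpha_k\,\frac{b_k-b_k^{-1}}{2}=\varepsilon\,\frac{z-z^{-1}}{2}.
\]
Here $\varepsilon=\pm1$ is a sign independent of $k$. In the real case these identities say $\cos\delta_k=\frac{a_kx-1}{a_k-x}$ and $\sin\delta_k=\frac{s_k\sin t}{a_k-x}$ up to sign, which one reads off from \eqref{TUzero} after dividing numerator and denominator by $2c_k$. For complex $c_k$ I would simply clear denominators and verify both sides as rational functions of $z$. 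For example, $(z-c)^2+(1-cz)^2$ and $(z-c)(1-cz)$ can be expressed through $x$ and $a$ after dividing by $cz$.

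Adding the two identities gives $\alpha_kb_k+\beta_k=\varepsilon\frac{z-z^{-1}}{2}$. Subtracting them gives $\alpha_kb_k^{-1}+\beta_k=-\varepsilon\frac{z-z^{-1}}{2}$. Both right-hand sides are independent of $k$.

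Now divide the recurrence applied to $u_n=f_n$ by $f_n$. It becomes
\[
\alpha_{n+1}b_{n+1}+\beta_{n+1}+\beta_n+\alpha_n b_n^{-1}=0,
\]
and this holds because it is $\varepsilon\frac{z-z^{-1}}2-\varepsilon\frac{z-z^{-1}}2$. For $u_n=f_n^{-1}$ the roles of $b$ and $b^{-1}$ swap, and the same cancellation occurs. By linearity, $T_n=\frac12(f_n+f_n^{-1})$ satisfies the recurrence. So does $\frac{f_n-f_n^{-1}}{z-z^{-1}}$, because the factor $(z-z^{-1})^{-1}$ does not depend on $n$.

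For the initial conditions, note that $T_0=1$ is immediate. The first identity with $k=1$ gives $T_1=\frac12(b_1+b_1^{-1})=-\beta_1/\alpha_1=\frac{1-a_1x}{x-a_1}$. For the second solution, $\frac{f_0-f_0^{-1}}{z-z^{-1}}=0$, and $\frac{f_1-f_1^{-1}}{z-z^{-1}}=\varepsilon/\alpha_1$, which should equal $-\frac{s_1}{x-a_1}$ once $\varepsilon=-1$ is confirmed. Here I would take care with the indexing. Definition \eqref{ChebRat_c} carries a shift $f_{n+1}$, whereas the stated initial conditions ($u_0=0$) correspond to the unshifted sequence, consistently with the earlier remark giving $U_0=0$. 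I would state explicitly that the $U$-solution is $\frac{f_n-f_n^{-1}}{z-z^{-1}}$, matching the paper's normalization of $U_n$ in the remark.

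Uniqueness follows because $\alpha_{n+1}\not\equiv0$, so $u_{n+1}$ is determined by $u_n$ and $u_{n-1}$. In particular, both sequences are rational in $x$.

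I expect the main obstacle to be bookkeeping rather than any conceptual difficulty. One issue is fixing the branch of $s_k$ and hence the sign $\varepsilon$ consistently for complex $a_k$. The other is checking that the second identity is genuinely $k$-independent; that independence is exactly what makes the middle coefficient split as $\beta_n+\beta_{n+1}$. I would first verify it in the real case via \eqref{TUzero}, and then extend it by the rational identity in $z$, or by analytic continuation in $c_k$.
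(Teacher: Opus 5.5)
Your proposal is correct and is essentially the paper's argument. The paper writes $T_{n+1},T_n,T_{n-1}$ in terms of $f_{n-1}^{\pm 1}$ and requires the coefficients of $f_{n-1}$ and $f_{n-1}^{-1}$ to vanish separately (i.e.\ that $f_n$ and $f_n^{-1}$ each solve the recurrence), solving the resulting $2\times 2$ system for the coefficients; your factor-wise identities $\alpha_kb_k+\beta_k=-\tfrac{z-z^{-1}}{2}$ and $\alpha_kb_k^{-1}+\beta_k=\tfrac{z-z^{-1}}{2}$ (so $\varepsilon=-1$ for $s_k=\tfrac{1-c_k^2}{2c_k}$) carry out explicitly the ``straightforward'' final check the paper omits, and your indexing caveat is right, since with \eqref{ChebRat_c} the solution with $u_0=0$, $u_1=-\sqrt{a_1^2-1}/(x-a_1)$ is $(f_n-f_n^{-1})/(z-z^{-1})=U_{n-1}$, not $U_n$.
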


\begin{proof}
From the real case we know that $T_n$'s satisfy a recurrence relation and so we expect this to hold in the case of complex nodes. Therefore we just need to check that the equation $T_{n+1} + q(x)  T_{n} +p(x) T_{n-1} = 0$ has a solution in $p$ and $q$.
  
 Next, the observation 
    \begin{equation}
        f_n(z) = \frac{z-c_{n} }{1-c_n z} f_{n-1}(z)
    \end{equation}
    implies that
    \begin{equation}
        T_{n+1}(x) =\frac{1}{2}\left[ \frac{z-c_{n+1} }{1-c_{n+1} z} \frac{z-c_{n} }{1-c_{n} z} f_{n-1} + \frac{1-c_{n+1} z}{z-c_{n+1} } \frac{1-c_{n} z}{z-c_{n} } f_{n-1}^{-1} \right].
    \end{equation}
    A similar calculation for $T_n(x)$ gives the following 
     \begin{equation}
        T_{n}(x) =\frac{1}{2}\left[ \frac{z-c_{n} }{1-c_{n} z} f_{n-1} +\frac{1-c_{n} z}{z-c_{n} } f_{n-1}^{-1} \right].
    \end{equation}
       This implies the equation $T_{n+1} + q(x)  T_{n} +p(x) T_{n-1} = 0$ is equivalent to the following linear system
    \begin{equation}
        \begin{aligned}
            0 &= \frac{z-c_{n+1} }{1-c_{n+1} z} \frac{z-c_{n} }{1-c_{n} z} + q(x) \frac{z-c_{n} }{1-c_{n} z} + p(x),
        \\  0 &= \frac{1-c_{n+1} z}{z-c_{n+1} } \frac{1-c_{n} z}{z-c_{n} } + q(x)  \frac{1-c_{n} z}{z-c_{n} } + p(x).
        \end{aligned}
    \end{equation}
    The above equations have solution 
    \begin{equation}
        \begin{aligned}
            q(x) &= \frac{\left(c_n c_{n+1}-1\right) \left(c_{n+1} \left(\left(z^2+1\right) c_n-2 z\right)-2 z c_n+z^2+1\right)}{\left(c_n^2-1\right) \left(z-c_{n+1}\right) \left(z c_{n+1}-1\right)},
         \\ p(x) &=  \frac{\left(c_{n+1}^2-1\right) \left(z-c_n\right) \left(z c_n-1\right)}{\left(c_n^2-1\right) \left(z-c_{n+1}\right) \left(z c_{n+1}-1\right)} .
        \end{aligned}
    \end{equation}
    It is now straightforward using the definitions of $c_{k}$ and $z$ to show that the equation $T_{n+1} + q(x)  T_{n} +p(x) T_{n-1} = 0$ is equivalent to the recurrence relation in the statement of the theorem.
    The proof is similar for $U_n(x)$.
\end{proof}

\begin{corollary}
    For $n \in \N$ the functions $T_n$ and $U_n$ are rational functions in $x$.
\end{corollary}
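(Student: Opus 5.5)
The plan is an induction on $n$ driven by the recurrence of Theorem~\ref{thm::RecurrenceRelation}, together with an explicit check of the initial values. The key point is that the coefficients of that recurrence are polynomials of degree at most one in $x$. They do not involve $z$ at all, only the fixed constants $a_k$ and $\sqrt{a_k^2-1}$.

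First I verify the base cases directly from \eqref{ChebRat_c}. Since $f_0\equiv1$, we get $T_0=1$, which is rational. For $T_1$ I write $f_1(z)=\frac{z-c_1}{1-c_1z}$ and compute
\[
\tfrac12\Big(f_1+f_1^{-1}\Big)=\frac{(1+c_1^2)(z+z^{-1})/2-2c_1}{(1+c_1^2)-c_1(z+z^{-1})}.
\]
This quotient depends on $z$ only through $x=\frac12(z+z^{-1})$. Substituting $c_1+c_1^{-1}=2a_1$ gives $T_1=\frac{1-a_1x}{x-a_1}$.

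Similarly, $f_1-f_1^{-1}$ factors as $(z-z^{-1})(1-c_1^2)/\big((z-c_1)(z^{-1}-c_1)\big)$. The factor $z-z^{-1}$ cancels against the prefactor in the definition of $U$, which leaves a rational function of $x$. This should be the stated $-\sqrt{a_1^2-1}/(x-a_1)$, using $\frac12(c_1-c_1^{-1})=-\sqrt{a_1^2-1}$, and $U_0=(f_1-f_1^{-1})/(z-z^{-1})$ comes out the same way. I have to handle the index shift in $U_n$ carefully, so that the initial data match those in Theorem~\ref{thm::RecurrenceRelation}.

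For the inductive step, I assume $T_{n-1}$ and $T_n$ are rational in $x$ (respectively $U_{n-1},U_n$). I then solve the recurrence for $u_{n+1}$:
\[
u_{n+1}(x)=-\frac{\sqrt{a_{n+1}^2-1}}{x-a_{n+1}}\left[\left(\frac{a_{n+1}x-1}{\sqrt{a_{n+1}^2-1}}+\frac{a_nx-1}{\sqrt{a_n^2-1}}\right)u_n(x)+\frac{x-a_n}{\sqrt{a_n^2-1}}u_{n-1}(x)\right].
\]
The right-hand side is built from rational functions by sums, products and division by the nonzero polynomial $x-a_{n+1}$, so it is rational. The division is legitimate because $a_{n+1}\notin[-1,1]$ gives $a_{n+1}^2\neq1$, and the identity holds on the open set $\C\setminus([-1,1]\cup\{a_{n+1}\})$, where everything is defined. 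The conclusion then follows by induction. A byproduct is that the degrees of the numerator and denominator are at most $n$, with denominator dividing $\prod_{k\le n}(x-a_k)$.

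The main obstacle is justifying that the functions defined by \eqref{ChebRat_c} really coincide with the recurrence solutions on a common domain, including the base cases. This is really the content of Theorem~\ref{thm::RecurrenceRelation}, which I take as given.

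A cleaner independent route, which I would mention as a check, is to observe that $f_n(z)+f_n(z)^{-1}$ is invariant under $z\mapsto z^{-1}$, because $f_n(z^{-1})=1/f_n(z)$ for a real-symmetric Blaschke factor. More generally, using $\frac{1/z-c}{1-c/z}=\frac{1-cz}{z-c}$, each factor is inverted under $z\mapsto z^{-1}$, so the function is a rational function of $z$ symmetric under $z\leftrightarrow z^{-1}$ and hence a rational function of $z+z^{-1}=2x$. For $U_n$, the expression $f_{n+1}-f_{n+1}^{-1}$ is antisymmetric, so dividing by $z-z^{-1}$ again gives a symmetric rational function. This symmetric-function argument proves rationality directly without the recurrence, and I would present it as the primary proof if the index bookkeeping in the induction becomes awkward.
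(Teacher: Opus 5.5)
Your proposal is correct and follows the paper's route. The paper states the corollary as an immediate consequence of Theorem~\ref{thm::RecurrenceRelation}, just as in the earlier remark for real nodes: rational initial data, plus coefficients linear in $x$ with nonvanishing leading factor $(x-a_{n+1})/\sqrt{a_{n+1}^2-1}$. You also correctly noticed that the solution with $u_0=0$, $u_1=-\sqrt{a_1^2-1}/(x-a_1)$ is really $U_{n-1}$, not $U_n$. Your alternative argument via $f_n(1/z)=1/f_n(z)$ is also sound and independent of the recurrence, and it shows that $T_n(x)$ and $U_n(x)$ do not depend on which preimage $z$ of $x$ under the Joukowski map is used.
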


The following result generalizes theorem 1.2 part (e) of \cite{Borwein1994} to the case of complex $a_k$.

\begin{proposition}
    With $a_n \in \C\setminus [-1,1]$ and $U_n$ and $T_n$ as in \ref{ChebRat_c} the following is true:
    \begin{equation}\label{PellAbel}
     [T_n(x)]^2 + (1-x^2)[U_{n-1}(x)]^2 = 1.   
    \end{equation}
\end{proposition}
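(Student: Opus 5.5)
The plan is to verify the identity directly from the definitions \eqref{ChebRat_c} via the Joukowski parametrization. This turns it into the algebraic identity $\left(\tfrac{w+w^{-1}}{2}\right)^2-\left(\tfrac{w-w^{-1}}{2}\right)^2=1$. We then extend from a region where all formulas are valid to all $x$ by analyticity, since both sides are rational in $x$ by the Corollary after Theorem~\ref{thm::RecurrenceRelation}.

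First fix $x\in\C\setminus[-1,1]$ and let $z=z(x)$ be the inverse Joukowski map, so $x=\tfrac12(z+z^{-1})$. Using the branch fixed in Section~\ref{sec:ChebyshevPolynomials}, we have
\[
\sqrt{x^2-1}=\tfrac12\left(z-z^{-1}\right), \qquad 1-x^2=-\tfrac14\left(z-z^{-1}\right)^2 .
\]
Substituting the definition of $U_{n-1}$ from \eqref{ChebRat_c} (with index shifted, so it involves $f_n$) gives
\[
(1-x^2)\,[U_{n-1}(x)]^2=-\frac14\left(z-z^{-1}\right)^2\frac{\left(f_n(z)-f_n(z)^{-1}\right)^2}{\left(z-z^{-1}\right)^2}=-\frac14\left(f_n(z)-f_n(z)^{-1}\right)^2 .
\]
Similarly, $[T_n(x)]^2=\tfrac14\left(f_n(z)+f_n(z)^{-1}\right)^2$. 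Setting $w=f_n(z)$, the sum is $\tfrac14\bigl[(w+w^{-1})^2-(w-w^{-1})^2\bigr]=1$. Note that the factor $(z-z^{-1})^2$ cancels exactly, so the choice of branch is irrelevant here.

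The main point needing care is well-definedness: $w=f_n(z)$ must be finite and nonzero. Here $f_n(z)$ has zeros at $c_k$ and poles at $1/c_k$. Since $|c_k|<1$ and $|z|>1$, zeros cannot occur. Poles occur exactly when $z=1/c_k$, that is, when $x=a_k$. So the computation is valid on $\C\setminus([-1,1]\cup\{a_1,\dots,a_n\})$. We also need $z\neq z^{-1}$, which holds off $[-1,1]$.

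Both sides of \eqref{PellAbel} are rational functions of $x$, and they agree on this open set. Hence they agree identically, as rational functions. In particular the identity holds on $[-1,1]$ wherever the functions are finite, and the apparent poles at the $a_k$ cancel in the combination.

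Finally, I would remark that for real $a_k$ this recovers the trigonometric identity $\cos^2\theta_n+\sin^2\theta_n=1$ from \eqref{TQ_sum}, since $(1-x^2)U_{n-1}^2=\sin^2t\cdot Q_n^2/\sin^2t$. This serves as a consistency check with \cite{Borwein1994}.
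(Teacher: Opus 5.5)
Your proposal is correct and is the same argument as the paper's: substitute \eqref{ChebRat_c}, use $1-x^2=-\tfrac14(z-z^{-1})^2$ so that $(z-z^{-1})^2$ cancels, and reduce to $\tfrac14\bigl[(w+w^{-1})^2-(w-w^{-1})^2\bigr]=1$ with $w=f_n(z)$. Your check that $f_n(z)$ is finite and nonzero off $[-1,1]\cup\{a_1,\dots,a_n\}$, your observation that the sign of the branch is irrelevant, and the extension of the identity to all $x$ by rationality are details the paper leaves implicit, and they are sound.
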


\begin{proof}
    Noting that $z-z^{-1} = - 2 \sqrt{x^2 - 1}$ implies 
    \begin{equation*}
    \begin{aligned}
        & [T_n(x)]^2 + (1-x^2)[U_{n-1}(x)]^2 = \frac{1}{4} \left(f_n(z) + \frac{1}{f_n(z)}\right)^2 -\frac{1}{4}\left(f_n(z) - \frac{1}{f_n(z)}\right)^2
        \\ & = \frac{1}{4} \left( f_n(z)^2 + 2 +  \frac{1}{f_n(z)^2} -  \left( f_n(z)^2 - 2 +  \frac{1}{f_n(z)^2} \right) \right)
        \\ & = 1.
    \end{aligned}
    \end{equation*}
\end{proof}

Following the idea from \cite{Borwein1994} of using limits to find the corresponding residues one can express $T_n(x)$ in the following form
\begin{equation}\label{eq::TnPartialFraction}
    T_n(x) = A_{0,n} + \frac{A_{1,n}}{x-a_{1}} + \dots + \frac{A_{n,n}}{x-a_{n}} 
\end{equation}
where
\begin{equation}\label{eq::TnPartialFractionCoeffs}
    \begin{aligned}
        A_{0,n} &= \frac{(-1)^n}{2}(c_1^{-1}\dots c_n^{-1} + c_1\dots c_n),
    \\  A_{k,n} &= -\left( \frac{c_k -c_{k}^{-1}}{2} \right)^2 \prod_{\substack{j=1 \\ j \neq k } }^{n} \frac{1-c_k c_j}{c_k -c_j} .
    \end{aligned}
\end{equation}
Similarly, the Chebyshev rational function $U_n(x)$ has partial fraction expansion
\begin{equation}
    U_{n}(x) = \frac{B_{1,n+1}}{x-a_{1}} + \dots + \frac{B_{n+1,n+1}}{x-a_{n+1}} 
\end{equation}
where 
\begin{equation}\label{eq::BknDefinition}
    \begin{aligned}
        B_{k,n} &= - \frac{2}{c_k -c_{k}^{-1}}  A_{k,n} = \frac{A_{k,n}}{\sqrt{a_k^2-1}}
    \end{aligned}
\end{equation}

We can also establish the orthogonality relation, which was actually done in \cite{Akhiezer}*{Section A.5} and \cite{Borwein1994}*{Corollary 4.6} under slightly different assumptions but it can be straightforwardly applied to our complex settings. 

\begin{proposition}[\cite{Akhiezer},\cite{Borwein1994}]\label{thm::OrthogonalityRelationOne}
   We have that 
    \begin{equation}
        \int_{-1}^{1} T_n(x) \frac{1}{x-a_k} \frac{\dd x}{\sqrt{1-x^2 }} = 0.
    \end{equation}
    for $k\leq n$.
\end{proposition}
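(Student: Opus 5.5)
We pass to the angular variable and then to a contour integral over the unit circle. Substituting $x=\cos t$, $t\in[0,\pi]$, gives $\frac{\dd x}{\sqrt{1-x^2}}=\dd t$. The integrand is even in $t$, since $T_n(\cos t)$ and $\cos t$ are both even. Hence
\[
\int_{-1}^{1} T_n(x)\frac{1}{x-a_k}\frac{\dd x}{\sqrt{1-x^2}}=\frac12\int_{-\pi}^{\pi}\frac{T_n(\cos t)}{\cos t-a_k}\,\dd t .
\]
Put $z=e^{it}$ and use \eqref{ChebRat_c}, $T_n(x)=\frac12\bigl(f_n(z)+f_n(z)^{-1}\bigr)$. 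This definition is valid on the circle by continuity; its right-hand side is symmetric under $z\mapsto z^{-1}$, because $f_n(1/z)=1/f_n(z)$ for these Blaschke-type factors. Next,
\[
\cos t-a_k=\tfrac12(z+z^{-1})-\tfrac12(c_k+c_k^{-1})=\frac{(z-c_k)(1-c_k z)}{2c_k z},
\]
and $\dd t=\dd z/(iz)$. The integral therefore becomes
\[
\frac{1}{2i}\oint_{|z|=1}\Bigl(f_n(z)+\frac{1}{f_n(z)}\Bigr)\frac{c_k}{(z-c_k)(1-c_kz)}\,\dd z .
\]

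The two terms are treated separately, using the symmetry. The substitution $z\mapsto 1/z$ preserves the unit circle with reversed orientation and sends $f_n\mapsto 1/f_n$. The kernel $\frac{c_k\,\dd z}{(z-c_k)(1-c_kz)}$ changes to $\frac{c_k\,(-\dd z/z^2)}{(1/z-c_k)(1-c_k/z)}=-\frac{c_k\,\dd z}{(1-c_kz)(z-c_k)}$; combined with the orientation reversal, the kernel is invariant. Hence the $1/f_n$ term equals the $f_n$ term, and it suffices to show
\[
\oint_{|z|=1} f_n(z)\frac{\dd z}{(z-c_k)(1-c_kz)}=0 .
\]

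This last integral is evaluated by residues inside the disk. Here $f_n(z)=\prod_{j\le n}\frac{z-c_j}{1-c_jz}$ is analytic in $|z|\le1$, since $|c_j|<1$ and so the poles $1/c_j$ lie outside. The factor $\frac{1}{1-c_kz}$ is likewise analytic inside. The only possible singularity inside is $z=c_k$, and it is cancelled by the zero of the factor $\frac{z-c_k}{1-c_kz}$ of $f_n$, which is present precisely because $k\le n$. The integrand is therefore analytic on the closed disk, and Cauchy's theorem gives zero.

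The main obstacle is justifying the complex setting. For complex $a_k$, $c_k=a_k-\sqrt{a_k^2-1}$ must be chosen with $|c_k|<1$, and the branch of $x\mapsto z$ must be taken consistently with \eqref{ChebRat_c}. Since $T_n$ is a symmetric function of $z$ and $z^{-1}$, it extends continuously to $[-1,1]$ by $T_n(\cos t)=\frac12(f_n(e^{it})+f_n(e^{it})^{-1})$, which is all that is used. If $a_k$ is complex the integrand has no real pole, and $(z-c_k)(1-c_kz)$ does not vanish on $|z|=1$, so every step is legitimate. The orientation check in the $z\mapsto1/z$ substitution deserves a careful line but is routine.
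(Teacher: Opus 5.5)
Your proof is correct. It also does more than the paper does: the paper gives no argument here. It only cites Akhiezer and Borwein--Erd\'elyi--Zhang and asserts that their proofs carry over "straightforwardly" to the complex setting.

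Your contour computation supplies exactly that missing verification, and it fits the paper's definition well. There $f_n$ has denominators $1-c_kz$ rather than $1-\bar c_k z$, so for non-real nodes $f_n$ is in general not unimodular on $|z|=1$. Tools such as $|f_n(e^{it})|=1$ or the angle representation $T_n(\cos t)=\cos\theta_n(t)$ are therefore unavailable. Your argument needs only three facts:
\begin{itemize}
\item the inversion symmetry $f_n(1/z)=1/f_n(z)$;
\item analyticity of $f_n$ on $|z|\le 1$, since $|c_j|<1$;
\item the zero of $f_n$ at $c_k$ when $k\le n$.
\end{itemize}

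The same computation also shows the statement is sharp. For $k>n$ your reduced integrand $f_n(z)/((z-c_k)(1-c_kz))$ has a genuine pole at $c_k$ with residue $f_n(c_k)/(1-c_k^2)\neq 0$. For the constant test function the integral equals $\pi f_n(0)=\pi\prod_{j\le n}(-c_j)\neq 0$. So $T_n$ is orthogonal to $(x-a_k)^{-1}$ for $k\le n$, but not to constants.

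One slip to correct: expanding gives $\frac{(z-c_k)(1-c_kz)}{2c_kz}=a_k-\cos t$, not $\cos t-a_k$. Your prefactor should therefore be $-\frac{1}{2i}$. This has no effect on the vanishing of the integral.
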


Additionally, the following relation, which is known for the Chebyshev polynomials, can be generalized to the case of rational functions. 

\begin{theorem}\label{prop::OrthogonalityRelationTwo}
    The Chebyshev rational functions $U_n$ can be found via $T_n$ in the following way
    \begin{equation}
        U_{n-1}(w) = \int_{-1}^{1}\frac{T_n(w) - T_n(t)}{t-w} \frac{\dd t }{\pi\sqrt{1-t^2}}.
    \end{equation}
\end{theorem}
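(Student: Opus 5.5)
Our approach is to use the partial fraction expansion \eqref{eq::TnPartialFraction} together with the orthogonality relation of Proposition~\ref{thm::OrthogonalityRelationOne}. The right-hand side is linear in $T_n$, so we insert
\[
T_n(x) = A_{0,n} + \sum_{k=1}^n \frac{A_{k,n}}{x-a_k}
\]
and compute term by term. The constant $A_{0,n}$ cancels in $T_n(w)-T_n(t)$. For each pole term we use the identity
\[
\frac{1}{w-a_k}-\frac{1}{t-a_k} = \frac{t-w}{(w-a_k)(t-a_k)},
\]
so that
\[
\frac{1}{t-w}\left(\frac{A_{k,n}}{w-a_k}-\frac{A_{k,n}}{t-a_k}\right) = \frac{A_{k,n}}{(w-a_k)(t-a_k)}.
\]
The integrand is now a regular function of $t$ on $[-1,1]$. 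This is also why the integral makes sense for every $w$: the singularity at $t=w$ is removable, since $T_n$ is rational with poles off $[-1,1]$.

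Integrating, the right-hand side becomes
\[
\sum_{k=1}^n \frac{A_{k,n}}{w-a_k}\, I(a_k), \qquad I(a) = \frac{1}{\pi}\int_{-1}^1 \frac{\dd t}{(t-a)\sqrt{1-t^2}}.
\]
The next step is the classical evaluation $I(a) = -1/\sqrt{a^2-1}$, with the branch fixed by the inverse Joukowski map. This follows from the substitution $t=\cos\theta$ and a residue computation on the unit circle, or equivalently from $\phi$ in \eqref{eq::phi} being the Stieltjes transform of the arcsine measure. Combining this with \eqref{eq::BknDefinition}, $B_{k,n}=A_{k,n}/\sqrt{a_k^2-1}$, gives
\[
\text{RHS} = -\sum_{k=1}^n \frac{B_{k,n}}{w-a_k}.
\]
We then compare this with the expansion $U_{n-1}(w) = \sum_{k=1}^n B_{k,n}/(w-a_k)$. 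Here the overall sign, and the sign convention $z-z^{-1}=-2\sqrt{x^2-1}$ used in \eqref{ChebRat_c}, have to be tracked carefully. We would recheck this convention against the sign in the statement, for example by letting all $a_k\to\infty$, where the identity should reduce to the classical formula $U_{n-1}(w)=\frac1\pi\int_{-1}^1 \frac{T_n(w)-T_n(t)}{w-t}\frac{\dd t}{\sqrt{1-t^2}}$ up to the same sign convention.

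An alternative route that avoids the explicit coefficients is available. By Proposition~\ref{thm::OrthogonalityRelationOne}, $\int T_n(t)(t-a_k)^{-1}\,\dd\mu=0$ for $k\le n$. Hence both sides are rational functions with poles only at $a_1,\dots,a_n$, vanishing at infinity. Their residues at $a_k$ are $A_{k,n}$ times $-I(a_k)$ on one side and $B_{k,n}$ on the other, so they agree by \eqref{eq::BknDefinition}.

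The main obstacle is establishing the partial fraction data, namely \eqref{eq::TnPartialFraction} and the relation \eqref{eq::BknDefinition}, with consistent branches of $\sqrt{a_k^2-1}$ for complex $a_k$. The paper only sketches these. We would verify them by computing the residues of $T_n$ and $U_{n-1}$ at $a_k$ directly from \eqref{ChebRat_c}. Near $x=a_k$ we have $z\to c_k^{-1}$ and $f_n\to\infty$. The $1/f_n$ terms are then negligible, so the residues of $\tfrac12 f_n$ and $f_n/(z-z^{-1})$ differ exactly by the factor $2/(c_k^{-1}-c_k)$, which is the content of \eqref{eq::BknDefinition}.
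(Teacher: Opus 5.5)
Your first route is the paper's argument (partial fractions \eqref{eq::TnPartialFraction}, cancelling the factor $t-w$, the Stieltjes transform of the arcsine measure $d\mu(t)=\frac{dt}{\pi\sqrt{1-t^2}}$, then \eqref{eq::BknDefinition}), and every step you wrote is correct. But it ends at
\[
\int_{-1}^{1}\frac{T_n(w)-T_n(t)}{t-w}\,d\mu(t)=-\sum_{k=1}^{n}\frac{B_{k,n}}{w-a_k}=-U_{n-1}(w),
\]
and instead of concluding, you put the discrepancy down to a sign convention to be rechecked. There is no convention left to adjust. Since $f_n(1/z)=1/f_n(z)$, the expressions in \eqref{ChebRat_c} are invariant under $z\mapsto z^{-1}$, so $T_n$ and $U_{n-1}$ do not depend on any branch choice, and the integral is unambiguous.

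The check you propose goes against the printed statement. In the classical case $T_1(x)=x$, $U_0=1$, the right-hand side is $\int(-1)\,d\mu=-1$. With one finite node, $T_1(x)=\frac{1-ax}{x-a}=-a+\frac{1-a^2}{x-a}$ and $U_0(x)=\frac{\sqrt{a^2-1}}{a-x}$. The right-hand side equals $\frac{\sqrt{a^2-1}}{w-a}=-U_0(w)$. So the identity as stated is false by a sign. What your argument proves is the corrected, classical form
\[
U_{n-1}(w)=\int_{-1}^{1}\frac{T_n(w)-T_n(t)}{w-t}\,d\mu(t),
\]
and you should state and prove that rather than leave the sign open. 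The paper's proof gets the printed sign only because it evaluates $\int_{-1}^{1}\frac{d\mu(t)}{t-a_k}$ as $+1/\sqrt{a_k^2-1}$. That is wrong: for $a_k=2$ the integrand is negative on all of $[-1,1]$. Your value $I(a)=-1/\sqrt{a^2-1}$ is the correct one.

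Your alternative route does not rescue the printed sign either, because it contains a slip. The term $\int_{-1}^{1}\frac{T_n(t)}{t-w}\,d\mu(t)$ is holomorphic off $[-1,1]$. So the residue of the right-hand side at $w=a_k$ comes only from $T_n(w)\int_{-1}^{1}\frac{d\mu(t)}{t-w}=-T_n(w)\phi(w)$, and it equals $A_{k,n}I(a_k)=-B_{k,n}$, not $A_{k,n}\bigl(-I(a_k)\bigr)$. Done correctly, this route reproduces the conclusion of the first one. Also, Proposition \ref{thm::OrthogonalityRelationOne} is not what makes the right-hand side rational. That comes from the partial fraction computation itself, and orthogonality plays no role in this theorem.

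Your last paragraph is sound: it obtains \eqref{eq::BknDefinition} from the residues of $\tfrac12 f_n$ and $f_n/(z-z^{-1})$ at $z=c_k^{-1}$, using $c_k^{-1}-c_k=2\sqrt{a_k^2-1}$. This supplies a step the paper only asserts.

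Finally, with the corrected sign, the Pad\'e remark that follows should use $\phi(x)=\int_{-1}^{1}\frac{d\mu(t)}{x-t}$. It then yields $\phi T_n-U_{n-1}=\int_{-1}^{1}\frac{T_n(t)}{x-t}\,d\mu(t)$. Only the vanishing of this at the $a_k$ is used there, so that argument survives.
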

\begin{proof}
    From \eqref{eq::TnPartialFraction} this becomes 
    \begin{equation}
        \begin{aligned}
            &\int_{-1}^{1}\frac{T_n(w) - T_n(t)}{t-w} \frac{\dd t }{\sqrt{1-t^2}} 
            = \sum_{k=1}^{n} \int_{-1}^1 \left(\frac{A_{k,n}}{t-a_k} - \frac{A_{k,n}}{w-a_k}\right)\frac{1}{t-w}  \frac{\dd t }{\pi\sqrt{1-t^2}}
        \\  & = \sum_{k=1}^{n} A_{k,n} \int_{-1}^1 \frac{t-w}{(w-a_k)(t-a_k)} \frac{1}{t-w}  \frac{\dd t }{\pi\sqrt{1-t^2}}
        \\  & =  \sum_{k=1}^{n} \frac{A_{k,n}}{w-a_k} \int_{-1}^1 \frac{1}{t-a_k}  \frac{\dd t }{\pi\sqrt{1-t^2}} 
         =  \frac{1}{\sqrt{a_k^2-1}}\sum_{k=1}^{n} \frac{A_{k,n}}{w-a_k} 
       \\ &  =  \sum_{k=1}^{n} \frac{B_{k,n}}{w-a_k} = U_{n-1}(w).
        \end{aligned}
    \end{equation}
    the last equality following from \eqref{eq::BknDefinition}.
\end{proof}


\section{Multipoint Pad\'e approximants}
\label{sec::ProofOfPade}

In this section we will show that the property \eqref{ChebToPade} can be extended to the case of the Chebyshev rational functions. This extension is an interpolation property and so we should start by recalling the corresponding concept. 
\begin{definition}[\cite{Baker}]\label{def::GeneralizedPade}
    The multipoint Pad\'e approximant of type $[L/M]$ to the function $\phi$ at the points $\{a_1 ,\dots,a_{L+M+1}\}$ is the ratio $\frac{Q_{[L/M]}}{P_{[L/M]}}$ of polynomials $Q_{[L/M]}$ and $P_{[L/M]}$ of degrees $L$ and $M$ respectively such that $P_{[L/M]}(x) \phi(x) - Q_{[L/M]}(x)$ vanishes at the points $a_1, \dots a_{L+M+1}$.
\end{definition}


\begin{remark}
    In the event of confluence, that is if any two $a_i$ are equal, the Pad\'e approximant interpolates the function value $\phi(a_i)$ and the derivative $\phi'(a_i)$ at that point. In the case of complete confluence $a_1 = a_2 = \dots a_{L+M+1}$ the generalized Pad\'e approximant reduces to a Pad\'e approximant.
\end{remark}

\begin{remark}
    In particular, relevant here is the case where each point appears twice so that the Pad\'e approximant $\frac{Q_n}{P_n}$ solves the following interpolation problem:
    \begin{equation}\label{eq::InterpolationCondition}
        \begin{aligned}
            &\phi(a_i) = \left(\frac{Q_n}{P_n}\right)(a_i) & \text{ and } && \phi'(a_i) = \left(\frac{Q_n}{P_n}\right)'(a_i)
        \end{aligned}
    \end{equation}
    for each $i =1,\dots,n$.
    In this case, by \cite{Baker}*{Volume II, Theorem 1.1.1} the interpolation condition \eqref{eq::InterpolationCondition} is equivalent to the following
    \begin{equation}
        P_n(x) \phi(x) - Q_n(x) = r(x) \prod_{k=1}^{n}(x-a_k)^2
    \end{equation}
    where the remainder function $r(x)$ is regular at the $a_k$.
    For the general case the remainder function $r(x)$ is listed in \cite{Baker} and can be expressed as a determinant of a matrix the coefficients of which are divided differences of $\phi(x)$ at the points $\{a_i\}$.
    Finally, note that the remainder function and the Pad\'e approximant are uniquely determined by the interpolation condition \eqref{eq::InterpolationCondition} (for further details see 
    \cite{Baker}*{Vol. II, Chp. I }).
\end{remark}

\begin{theorem}
    Let $\phi(x) = 1/\sqrt{x^2 -1}$. Then we have that 
    
    \begin{equation}\label{IPforTU}
        \begin{aligned}
            \phi(a_i) &= \left(\frac{U_{n-1}}{T_n}\right)(a_i), & \text{ and } && \phi'(a_i) =  \left(\frac{U_{n-1}}{T_n}\right)'(a_i)
        \end{aligned}
    \end{equation}
     for each $i=1,\dots,n$. In other words, $U_{n-1}/T_n$ is a multipoint Pad\'e approximant of type $[n-1/n]$ to $\phi$ at $\{a_1,a_1,a_2,a_2,\dots,a_n,a_n\}$.
\end{theorem}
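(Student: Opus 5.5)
The plan is to reproduce the computation behind \eqref{ChebToPade}, with the Blaschke product $f_n$ taking the place of $z^n$. Work on $\C\setminus[-1,1]$ with $z=z(x)$, $|z|>1$, and with the branch of $\sqrt{x^2-1}$ for which $z-z^{-1}=-2\sqrt{x^2-1}$, as in the proof of \eqref{PellAbel}. In this convention $\phi(x)=-2/(z-z^{-1})$, and the zeros of the relevant factor sit at $z=c_k^{-1}$; I will fix signs so that $f_n$ vanishes at the points $z(a_k)$ (equivalently, replace $f_n$ by $1/f_n$ throughout, which leaves $T_n$ and $U_{n-1}$ unchanged up to sign). From \eqref{ChebRat_c},
\[
\frac{U_{n-1}(x)}{T_n(x)}=\frac{2}{z-z^{-1}}\cdot\frac{f_n-f_n^{-1}}{f_n+f_n^{-1}}=\frac{2}{z-z^{-1}}\cdot\frac{f_n^2-1}{f_n^2+1}.
\]
Subtracting this from $\phi$, the key identity to establish is
\begin{equation*}
\phi(x)-\frac{U_{n-1}(x)}{T_n(x)}=\pm\frac{2}{z-z^{-1}}\cdot\frac{2f_n(z)^2}{1+f_n(z)^2},
\end{equation*}
with the overall sign depending only on the branch convention.

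Next I will show that the right-hand side vanishes to second order at each $a_i$. The point $x=a_i$ corresponds to the value $z=z(a_i)$ with $|z(a_i)|>1$, which is one of $c_i$, $c_i^{-1}$. With the convention above, $f_n$ has a simple zero at $z(a_i)$. The map $x\mapsto z(x)$ is analytic and locally injective at $a_i$, because $z'(x)=z/\sqrt{x^2-1}\neq0$ there. Hence $f_n(z(x))^2$ has a zero of order exactly $2$ at $x=a_i$. The remaining factors $2/(z-z^{-1})$ and $1/(1+f_n^2)$ are analytic and nonzero near $a_i$. So the error is $O((x-a_i)^2)$, which gives both conditions in \eqref{IPforTU}. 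The distinctness of the $a_k$ guarantees these are $2n$ independent conditions.

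Finally, to identify this as the $[n-1/n]$ multipoint Pad\'e approximant in the sense of Definition \ref{def::GeneralizedPade}, I will use the partial fraction expansions \eqref{eq::TnPartialFraction} and the expansion of $U_{n-1}$. Multiplying both by $\prod_{k=1}^n(x-a_k)$ shows that $P_n=T_n\prod_{k}(x-a_k)$ is a polynomial of degree $\le n$ and $Q_n=U_{n-1}\prod_k(x-a_k)$ is a polynomial of degree $\le n-1$. Then
\[
P_n\phi-Q_n=\prod_k(x-a_k)\Bigl(\phi-\frac{U_{n-1}}{T_n}\Bigr)T_n.
\]
Since $T_n$ has only simple poles at the $a_k$, the product $\prod_k(x-a_k)\,T_n$ is regular there, and the factor $\phi-U_{n-1}/T_n$ supplies the double zeros. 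Therefore $P_n\phi-Q_n=r(x)\prod_k(x-a_k)^2$ with $r$ regular at the $a_k$, which is exactly the criterion quoted from \cite{Baker}.

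The main obstacle I expect is bookkeeping of the branch and sign conventions. One must check whether $x=a_i$ corresponds to $z=c_i$ or $z=c_i^{-1}$ under the chosen inverse Joukowski map, and whether $f_n$ or $1/f_n$ is the factor that vanishes there. The sign of $\sqrt{x^2-1}$ fixed in the proof of \eqref{PellAbel} and the relation $\tfrac12(c_k-c_k^{-1})=-\sqrt{a_k^2-1}$ must be reconciled so that the difference formula really contains $f_n^2$ rather than $f_n^{-2}$. A subsidiary check is that $1+f_n^2$, equivalently $T_n$, does not vanish at the $a_i$. This holds because $T_n$ has a pole there: the coefficient $A_{i,n}$ in \eqref{eq::TnPartialFractionCoeffs} is nonzero, since $c_i\neq c_i^{-1}$, the $c_j$ are distinct, and $1-c_ic_j\neq0$ because $|c_i|,|c_j|<1$.
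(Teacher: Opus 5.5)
Your argument is essentially the paper's. The paper writes $U_{n-1}/T_n=\phi\,(1-f_n^2)/(1+f_n^2)$ and uses $f_n(c_i)=0$ to match the value and, after differentiating, the derivative. That is the same content as your double zero of $\phi-U_{n-1}/T_n=\phi\cdot 2f_n^2/(1+f_n^2)$, which is the error formula \eqref{eq:errorFunction} in the remark after the proof; your degree count for $\omega_nT_n$ and $\omega_nU_{n-1}$ also appears there.

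The sign bookkeeping you flag disappears if you take the root with $|z|<1$, as the paper does ($x=a_i\Rightarrow z=c_i$). This is legitimate because $f_n(1/z)=1/f_n(z)$ makes $T_n$ and $U_{n-1}$ in \eqref{ChebRat_c} invariant under $z\mapsto z^{-1}$. Then $z-z^{-1}=-2\sqrt{x^2-1}$ holds for the branch fixed in \eqref{eq::phi}, and neither the $\pm$ nor the swap $f_n\leftrightarrow f_n^{-1}$ is needed. By contrast, your literal choice ($|z|>1$ together with $z-z^{-1}=-2\sqrt{x^2-1}$) selects $-\phi$, for which the statement is false. It is rescued only because the swap also flips the sign of $U_{n-1}$.
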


\begin{proof}
   Note 
    \begin{equation}
        \left(\frac{U_{n-1}}{T_n}\right)(x) = \frac{2}{z-z^{-1}}\frac{f_{n}(z){}^{2}-1}{f_n(z){}^{2}+1} =  \phi(x) \frac{1-f_{n}(z){}^{2}}{f_n(z){}^{2}+1} .
    \end{equation}
    The condition on $\phi(a_i)$ follows from the fact $x = a_i$ implies that $z = c_i$.
    The condition on $\phi'(a_i)$ follows from a slightly longer calculation.
    Observe
    \begin{equation}
        \begin{aligned}
            \derivative{}{x} \left(\frac{U_{n-1}}{T_n}\right)(x) &= \phi'(x)\frac{1-f_{n}(z){}^{2}}{f_n(z){}^{2}+1} + \phi(x)\frac{2 z^2}{z^2-1}  \derivative{}{z} \frac{1-f_{n}(z){}^{2}}{f_n(z){}^{2}+1}             
            \\ & = \phi'(x)\frac{1-f_{n}(z){}^{2}}{f_n(z){}^{2}+1} - \phi(x) \frac{2 z^2}{z^2-1} \frac{4 f_n(z) f_n'(z)}{\left(f_n(z){}^2+1\right){}^2}
        \end{aligned}
    \end{equation}
    evaluating at $x=a_i$ gives
    \begin{equation}
        \left(\frac{U_{n-1}}{T_n}\right)'(a_i) = -\frac{a_i}{(a_i^2 -1)^{\frac{3}{2}}}.
    \end{equation} 
\end{proof}
\begin{remark}
Note that the interpolation condition \eqref{IPforTU} is different from the one considered in \cites{DZh,D11}, where each node had multiplicity one while \eqref{IPforTU} means that each node has multiplicity 2.
\end{remark}
\begin{remark}
There is a more general way to see $U_{n-1}/T_n$ is a multipoint Pad\'e approximant. Let $\omega_n(x) = \prod_{k=1}^n (x- a_k)$.
Observe that, 
\begin{equation}
    \begin{aligned}
        \phi(x)T_n(x) &- U_{n-1}(x) =  T_n(x)\int_{-1}^1\frac{1}{t-x} \frac{\dd t }{\pi\sqrt{1-t^2}} -U_{n-1}(x)
        \\ & = \int_{-1}^{1}\frac{T_n(x) - T_n(t)}{t-x} \frac{\dd t }{\pi\sqrt{1-t^2}} + \int_{-1}^{1}\frac{T_n(t)}{t-x} \frac{\dd t }{\pi\sqrt{1-t^2}} -U_{n-1}(x)
       \\ & = \int_{-1}^{1}\frac{T_n(t)}{t-x} \frac{\dd t }{\pi\sqrt{1-t^2}} 
    \end{aligned}
\end{equation}
where the last equality follows from Theorem \ref{prop::OrthogonalityRelationTwo}.
By Proposition \ref{thm::OrthogonalityRelationOne} it can be concluded that $\phi(x)T_n(x) - U_{n-1}(x)$ vanishes at $a_i$ for each $i=1,\dots,n$.
Therefore, the polynomials $P_n(x) = \omega_n(x)T_n$ and $Q_n(x) = \omega_n(x)U_{n-1}$  satisfy
\begin{equation*}
    P_n(x) \phi(x) - Q_n(x) = [\omega_n(x)]^2 r(x).
\end{equation*}
For a function $r(x)$ regular at each of the $a_i$.
Noting that $T_n$ and $U_n$ have zeros only in $[-1,1]$ and therefore which cannot coincide with an interpolation node, it can be concluded that the ratio $\frac{U_{n-1}(x)}{T_n(x)}$ is a generalized Pad\'e approximant of $\phi(x)$. 
Finally, there is another way, which is not as general as the above method, but can be generalized to a larger class of rational functions. 
Namely, \eqref{PellAbel} can be rewritten as
\[
\frac{1}{\sqrt{x^2-1}}-\frac{U_{n-1}(x)}{T_n(x)}=\frac{1}{T_n(x)(T_n(x)+\sqrt{x^2-1}U_{n-1}(x))\sqrt{x^2-1}}
\]
or
\begin{equation}
\label{eq:errorFunction}
    \frac{1}{\sqrt{x^2-1}}-\frac{U_{n-1}(x)}{T_n(x)}= \frac{ [\omega_n(x)]^2}{P_n(x)(P_n(x)+\sqrt{x^2-1}Q_{n}(x))\sqrt{x^2-1}},
\end{equation}
which not only yields the interpolation condition \eqref{IPforTU} but also gives an explicit expression for the error function $r(x)$.
\end{remark}

\begin{theorem}
    Let $\frac{Q_{n-1}(x)}{P_{n}(x)}$ be a type $[n-1/n]$ Pad\'e approximant to a function $\phi(x)$ at the points $\{a_1,a_1,\dots,a_n,a_n\}$.
    Let $\omega_n(x) = \prod_{k=1}^n (x- a_k)$.
    Then,
    \begin{equation*}
    \frac{P_n(x)}{\omega_n(x)} = \begin{vmatrix}
        \phi'(a_1) &  \frac{ \phi(a_1) - \phi(a_2)}{a_1-a_2}  & \frac{ \phi(a_1) - \phi(a_3)}{a_1-a_3} & \dots & \frac{ \phi(a_1) - \phi(a_n)}{a_1-a_n} &\phi(a_1)
    \\    \frac{ \phi(a_2) - \phi(a_1)}{a_2-a_1}  &   \phi'(a_2) &  \frac{ \phi(a_2) - \phi(a_3)}{a_2-a_3} & \dots & \frac{ \phi(a_2) - \phi(a_n)}{a_2-a_n}  &\phi(a_2)
    \\ \vdots &\vdots & \vdots & &\vdots &\vdots
    \\ \frac{ \phi(a_n) - \phi(a_1)}{a_n-a_1}  &  \frac{ \phi(a_n) - \phi(a_2)}{a_n-a_2} &  \frac{ \phi(a_n) - \phi(a_3)}{a_n-a_3} & \dots & \phi'(a_n) &\phi(a_n)
    \\ \frac{1}{x-a_1}& \frac{1}{x-a_2}& \frac{1}{x-a_3} & \dots &\frac{1}{x-a_n}& 1
    \end{vmatrix},
 \end{equation*}

and 

 \begin{equation*} 
    \frac{Q_{n-1}(x)}{\omega_n(x)}  =\begin{vmatrix}
        \phi'(a_1) &  \frac{ \phi(a_1) - \phi(a_2)}{a_1-a_2}  & \frac{ \phi(a_1) - \phi(a_3)}{a_1-a_3} & \dots & \frac{ \phi(a_1) - \phi(a_n)}{a_1-a_n} &\phi(a_1)
    \\    \frac{ \phi(a_2) - \phi(a_1)}{a_2-a_1}  &   \phi'(a_2) &  \frac{ \phi(a_2) - \phi(a_3)}{a_2-a_3} & \dots & \frac{ \phi(a_2) - \phi(a_n)}{a_2-a_n}  &\phi(a_2)
    \\ \vdots &\vdots & \vdots & &\vdots &\vdots
    \\ \frac{ \phi(a_n) - \phi(a_1)}{a_n-a_1}  &  \frac{ \phi(a_n) - \phi(a_2)}{a_n-a_2} &  \frac{ \phi(a_n) - \phi(a_3)}{a_n-a_3} & \dots & \phi'(a_n) &\phi(a_n)
    \\ \frac{\phi(a_1)}{x-a_1}& \frac{\phi(a_2)}{x-a_2}& \frac{\phi(a_3)}{x-a_3} & \dots &\frac{\phi(a_n)}{x-a_n}& 0
    \end{vmatrix}.
    \end{equation*}
\end{theorem}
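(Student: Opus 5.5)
Since $\deg P_n\le n$ and $\deg Q_{n-1}\le n-1$, we can write partial fractions relative to the nodes:
\[
\frac{P_n(x)}{\omega_n(x)}=\beta_0+\sum_{k=1}^n\frac{\beta_k}{x-a_k},\qquad \frac{Q_{n-1}(x)}{\omega_n(x)}=\sum_{k=1}^n\frac{\gamma_k}{x-a_k}.
\]
Here $\beta_k=P_n(a_k)/\omega_n'(a_k)$ and $\gamma_k=Q_{n-1}(a_k)/\omega_n'(a_k)$. The plan is to translate the interpolation condition into a homogeneous linear system for $(\beta_1,\dots,\beta_n,\beta_0)$ and then use the standard trick: appending the row $(1/(x-a_1),\dots,1/(x-a_n),1)$ to the coefficient matrix and taking the determinant yields, by Cramer's rule, a function proportional to $\beta_0+\sum\beta_k/(x-a_k)$. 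Since the Pad\'e approximant is only defined up to a common scalar factor, the identities hold with the normalization implicitly chosen by the determinants.

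\textbf{Step 1 (the $\gamma_k$).} By the remark above, the condition $P_n\phi-Q_{n-1}=r\,\omega_n^2$ with $r$ regular at the nodes holds. Evaluating at $a_k$ gives $Q_{n-1}(a_k)=\phi(a_k)P_n(a_k)$, hence $\gamma_k=\phi(a_k)\beta_k$. Thus the numerator is determined by the $\beta$'s, and $Q_{n-1}/\omega_n=\sum_k \phi(a_k)\beta_k/(x-a_k)$. This explains the last row $(\phi(a_k)/(x-a_k),\,0)$ of the second determinant.

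\textbf{Step 2 (the linear system).} Divide $P_n\phi-Q_{n-1}=r\omega_n^2$ by $\omega_n$:
\[
g(x):=\phi(x)\Bigl(\beta_0+\sum_k\frac{\beta_k}{x-a_k}\Bigr)-\sum_k\frac{\phi(a_k)\beta_k}{x-a_k}=r(x)\,\omega_n(x),
\]
so $g$ must vanish at each $a_i$. Rewrite
\[
g(x)=\beta_0\phi(x)+\sum_k\beta_k\,\frac{\phi(x)-\phi(a_k)}{x-a_k}.
\]
Evaluating at $x=a_i$, the $k=i$ term tends to $\phi'(a_i)$ and the others are divided differences. This gives, for $i=1,\dots,n$,
\[
\sum_{k\ne i}\beta_k\frac{\phi(a_i)-\phi(a_k)}{a_i-a_k}+\beta_i\phi'(a_i)+\beta_0\phi(a_i)=0,
\]
which is exactly row $i$ of the determinants paired with the vector $(\beta_1,\dots,\beta_n,\beta_0)$. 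Conversely, these $n$ equations are equivalent to the double interpolation, since $g$ is regular at the nodes.

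\textbf{Step 3 (Cramer's rule).} Let $M$ be the $n\times(n+1)$ matrix of these rows. Expanding the $(n+1)\times(n+1)$ determinant along its last row gives $\sum_j \ell_j(x)\,C_j$, where $C_j$ are the signed maximal minors of $M$ and $\ell_j(x)$ are the last-row entries. For a generic (rank $n$) system, the kernel of $M$ is spanned by $(C_1,\dots,C_{n+1})$, so $(\beta_1,\dots,\beta_n,\beta_0)=\lambda(C_j)$. Substituting, the first determinant equals $\lambda^{-1}P_n/\omega_n$, and with the row $(\phi(a_k)/(x-a_k),0)$ the second equals $\lambda^{-1}Q_{n-1}/\omega_n$ by Step 1. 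Fixing $\lambda=1$ as the normalization gives the stated formulas.

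The main obstacle is bookkeeping rather than ideas: verifying the limit giving $\phi'(a_i)$ on the diagonal, keeping the column order (with $\beta_0$ last) and the cofactor signs consistent, and making precise the rank-$n$ nondegeneracy assumption, namely uniqueness of the Pad\'e approximant, under which the kernel is one-dimensional.
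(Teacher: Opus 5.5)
Your proof is correct and is essentially the paper's argument: the same partial-fraction ansatz for $P_n/\omega_n$ and $Q_{n-1}/\omega_n$, the relation $\gamma_k=\phi(a_k)\beta_k$, the same $n$ linear equations for $(\beta_1,\dots,\beta_n,\beta_0)$, and a cofactor expansion along the appended last row, with the free scalar fixed by the determinant normalization. Your two variations are improvements. Obtaining the equations from the linearized condition $g(a_i)=0$ avoids the paper's tacit assumption $P_n(a_i)\neq 0$ (the paper divides by $A_{i,n}$). Taking the cofactor vector as a kernel basis needs only rank $n$ rather than $\det M\neq 0$, and it gets the signs right automatically; in the paper, the displayed system has $+A_{0,n}\phi(a_k)$ on the right where its own derivation gives $-A_{0,n}\phi(a_k)$, and this is compensated by a second sign slip in its claimed cofactor expansion. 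One correction to your closing remark: rank $n$ means uniqueness of the pair $(P_n,Q_{n-1})$ up to scaling, not uniqueness of the rational function $Q_{n-1}/P_n$, which is always unique.
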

\begin{proof}
    The rational functions $\frac{P_n(x)}{\omega_n(x)}$ and $\frac{Q_{n-1}(x)}{\omega_n(x)}$ can be expanded as partial fractions as
    \begin{equation}
       \begin{aligned}
         \frac{P_n(x)}{\omega_n(x)} &= A_{0,n} + \sum_{k=1}^{n} \frac{A_{k,n}}{x-a_{k}},
      \\   \frac{Q_n(x)}{\omega_n(x)} &= \sum_{k=1}^{n} \frac{B_{k,n}}{x-a_{k}}
       \end{aligned}
    \end{equation}
    for some $A_{0,n},A_{1,n},\dots A_{n,n}$. 
    Define,
    \begin{equation}\label{eq::MkandWk}
    \begin{aligned}
        w_{k}(x)  &= \sum_{i\neq k} \frac{B_{i,n}}{x-a_i},
     \\ m_{k}(x)  &= A_{0,n} + \sum_{i\neq k} \frac{A_{i,n}}{x-a_i}.
    \end{aligned}
\end{equation}
The first set of interpolation conditions, which constrain the values of the Pad\'e approximant at the interpolation nodes, provides a simple constraint on the ratio of $A_{i,n}$ and $B_{i,n}$.
Observe
    \begin{equation}\label{eq::FirstInterpolationCondition}
       \phi(a_i) =  \lim_{x\to a_i} \frac{Q_{n-1}(x)}{P_n(x)} = \lim_{x\to a_i}  \frac{ \frac{B_{i,n}}{x-a_i} + w_{i}(x) }{\frac{A_{i,n}}{x-a_i} + m_{i}(x) } = \frac{B_{i,n}}{A_{i,n}}
    \end{equation}
    follows from the fact that $m_i,w_i$ are regular at $a_i$. 
The second interpolation condition, which constrains the derivative of the Pad\'e approximant at the interpolation nodes, provides a more complicated set of equations for $A_{i,n},B_{i,n}$.
They are
\begin{equation}
    \phi'(a_i) =\lim_{x\to a_i} \derivative{}{x} \frac{Q_{n-1}(x)}{P_{n}(x)} = \frac{A_{i,n} w_{i}(a_i) -B_{i,n} m_i(a_i)}{A_{i,n}^2}
\end{equation}
for $i=1,\dots,n$.
Using \eqref{eq::FirstInterpolationCondition} as well as the definitions of $m_i,w_i$ this becomes
\begin{equation}\label{eq::SecondInterpolationCondition}
    \begin{aligned}
        \phi'(a_k)A_{k,n} &= w_{k}(a_k) - \phi(a_k) m_k(a_k)
        \\ & =  \sum_{i\neq k} \frac{\phi(a_i) A_{i,n}}{a_k-a_i} - \sum_{i\neq k} \frac{\phi(a_k) A_{i,n}}{a_k-a_i} - A_{0,n} \phi(a_k)
        \\ & =  -\sum_{i\neq k} \frac{ \phi(a_k) - \phi(a_i)}{a_k-a_i} A_{i,n} - A_{0,n} \phi(a_k)
    \end{aligned}
\end{equation}
 Observe that \eqref{eq::SecondInterpolationCondition} describes the linear system of equations:
    \begin{equation*}
    \begin{pmatrix}
        \phi'(a_1) &  \frac{ \phi(a_1) - \phi(a_2)}{a_1-a_2}  & \frac{ \phi(a_1) - \phi(a_3)}{a_1-a_3} & \dots & \frac{ \phi(a_1) - \phi(a_n)}{a_1-a_n} 
    \\    \frac{ \phi(a_2) - \phi(a_1)}{a_2-a_1}  &   \phi'(a_2) &  \frac{ \phi(a_2) - \phi(a_3)}{a_2-a_3} & \dots & \frac{ \phi(a_2) - \phi(a_n)}{a_2-a_n} 
    \\ \vdots &\vdots & \vdots & &\vdots
    \\ \frac{ \phi(a_n) - \phi(a_1)}{a_n-a_1}  &  \frac{ \phi(a_n) - \phi(a_2)}{a_n-a_2} &  \frac{ \phi(a_n) - \phi(a_3)}{a_n-a_3} & \dots & \phi'(a_n) 
    \end{pmatrix}
    \begin{pmatrix}
        A_{1,n}
     \\ A_{2,n}
     \\\vdots
     \\ A_{n,n}   
    \end{pmatrix} = 
    A_{0,n} \begin{pmatrix}
        \phi(a_1)
     \\ \phi(a_2)
     \\\vdots
     \\ \phi(a_n)   
    \end{pmatrix}.
\end{equation*}
Let 
\begin{equation}
   M =  \begin{pmatrix}
        \phi'(a_1) &  \frac{ \phi(a_1) - \phi(a_2)}{a_1-a_2}  & \frac{ \phi(a_1) - \phi(a_3)}{a_1-a_3} & \dots & \frac{ \phi(a_1) - \phi(a_n)}{a_1-a_n} 
    \\    \frac{ \phi(a_2) - \phi(a_1)}{a_2-a_1}  &   \phi'(a_2) &  \frac{ \phi(a_2) - \phi(a_3)}{a_2-a_3} & \dots & \frac{ \phi(a_2) - \phi(a_n)}{a_2-a_n} 
    \\ \vdots &\vdots & \vdots & &\vdots
    \\ \frac{ \phi(a_n) - \phi(a_1)}{a_n-a_1}  &  \frac{ \phi(a_n) - \phi(a_2)}{a_n-a_2} &  \frac{ \phi(a_n) - \phi(a_3)}{a_n-a_3} & \dots & \phi'(a_n) 
    \end{pmatrix}
\end{equation}
and let $M_{i}$ be $M$ with the $i$th column replaced with $ (\phi(a_1) \,\,\, \dots \,\,\, \phi(a_n))^T$
then by Cramer's rule
\begin{equation}
    A_{i,n} = A_{0,n} \frac{\det M_i }{ \det M}.
\end{equation}
Now we have that,
\begin{equation}\label{eq::CofactorExpansion}
    T_n(x) = \frac{ A_{0,n} }{ \det M} \left( \det M + \frac{\det M_1}{x-a_1} +\dots +  \frac{\det M_n }{x-a_n} \right)
\end{equation}
By picking $A_{0,n} =  \det M$ and observing that equation \eqref{eq::CofactorExpansion} is the cofactor expansion of the determinant in the statement of the theorem proves the theorem for $P_n(x)$.
The result for $Q_{n-1}(x)$ follows from a similar observation as well as the interpolation condition \eqref{eq::FirstInterpolationCondition}.
\end{proof}

\section{Convergence of multipoint Pad\'e approximants}
\label{sec::Convergence}

In this section we will demonstrate that the ratio $\dfrac{U_{n-1}}{T_{n}}$ converges in some cases. 
\begin{theorem}\label{thm:ConvergenceRealNodes}
Let $a_k\in{\mathbb{R}}\setminus[-1,1]$ and let the corresponding $c_k$ satisfy
\[
\sum_{k=1}^{\infty}(1-|c_k|)=\infty.
\]
Then the multipoint Pad\'e approximant $\dfrac{U_{n-1}}{T_{n}}$ converges to $\phi$ locally uniformly on ${\mathbb C}\setminus[-1,1]$.
\end{theorem}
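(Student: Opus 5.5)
We work through the explicit error formula. From the proof of the interpolation theorem,
\[
\frac{U_{n-1}(x)}{T_n(x)}=\phi(x)\,\frac{1-f_n(z)^2}{1+f_n(z)^2},
\]
where $x=\tfrac12(z+z^{-1})$. We choose the branch so that $|z|<1$ for $x\in\mathbb C\setminus[-1,1]$; this is consistent with $x=a_i$ corresponding to $z=c_i$ with $|c_i|<1$. Subtracting from $\phi$ gives
\[
\phi(x)-\frac{U_{n-1}(x)}{T_n(x)}=\phi(x)\,\frac{2f_n(z)^2}{1+f_n(z)^2}.
\]
Here $\phi$ is bounded on compact subsets $K\subset\mathbb C\setminus[-1,1]$. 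The map $x\mapsto z$ sends such a $K$ onto a compact subset $K'$ of the open unit disk $\mathbb D$. It therefore suffices to prove that $f_n\to 0$ uniformly on compact subsets of $\mathbb D$. Once $|f_n|\le 1/2$ on $K'$, the denominator satisfies $|1+f_n^2|\ge 3/4$, and the error is bounded by $C\sup_{K'}|f_n|^2$.

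The key step is the classical Blaschke product argument. Because the $c_k$ are real, each factor $b_k(z)=\frac{z-c_k}{1-c_kz}$ is a disk automorphism, and
\[
1-|b_k(z)|^2=\frac{(1-c_k^2)(1-|z|^2)}{|1-c_kz|^2}.
\]
For $|z|\le r<1$ we have $|1-c_kz|\le 2$ and $1-c_k^2\ge 1-|c_k|$. Hence $1-|b_k(z)|^2\ge \tfrac14(1-r^2)(1-|c_k|)$. Taking logarithms,
\[
\log|f_n(z)|^2=\sum_{k=1}^n\log|b_k(z)|^2\le-\sum_{k=1}^n\bigl(1-|b_k(z)|^2\bigr)\le -\tfrac{1-r^2}{4}\sum_{k=1}^n(1-|c_k|),
\]
which tends to $-\infty$ uniformly on $|z|\le r$ by the divergence hypothesis. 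So $f_n\to0$ uniformly on compacts of $\mathbb D$, and the error decays like $\exp\bigl(-\tfrac{1-r^2}{4}\sum_{k\le n}(1-|c_k|)\bigr)$.

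The only delicate point is the branch bookkeeping. The earlier sections mix the convention $|z|>1$ from Section~\ref{sec:ChebyshevPolynomials} with $|c_k|<1$. I would fix the convention $|z|<1$, equivalently replace $z$ by $1/z$. Under $z\mapsto 1/z$ we have $f_n(1/z)=1/f_n(z)$, because each factor is inverted. The formula for $U_{n-1}/T_n$ is invariant under this substitution up to the consistent sign of $\phi$, so the representation above holds with $|z|<1$. Beyond this, the argument needs only that $|1-c_kz|$ stays bounded and that $1-|c_k|$ controls $1-c_k^2$, both of which are immediate. The real-node hypothesis is used just to match the setting; the same argument works for complex $c_k$ with $|c_k|<1$.
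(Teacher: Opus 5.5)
Your proof is correct and takes the paper's route: both write $U_{n-1}/T_n$ as $\phi$ times a M\"obius expression in $f_n(z)$ and reduce everything to the Blaschke-product fact. The paper only cites that fact, working with $|z|>1$ where it needs $2/(f_n^2+1)\to 0$; you prove it with an explicit rate in the equivalent $|z|<1$ normalization, using $f_n(1/z)=1/f_n(z)$.

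One caution concerns your closing aside. The argument does not extend to arbitrary complex $c_k$ with the paper's $f_n$, because the factors $(z-c_k)/(1-c_kz)$ carry no conjugate and so are not disk automorphisms. For example, for distinct $c_k\to i/2$ each factor has modulus near $4/3$ at $z=-i/2$, so $f_n(-i/2)\to\infty$ and $U_{n-1}/T_n\to-\phi$ at $x=3i/4$. This is exactly why the paper's roots-of-unity theorem relies on the conjugate-pair structure of the nodes.
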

\begin{proof}
Note that
\[
\frac{U_{n-1}(x)}{T_n(x)}=\frac{2}{z-z^{-1}}\frac{f^2_{n}(z)-1}{f^2_n(z)+1}=\frac{2}{z-z^{-1}}\left(1-\frac{2}{f^2_n(z)+1}\right).
\]
Using the standard fact about Blaschke products we conclude that 
\[
\frac{2}{f^2_n(z)+1}
\]
converges to 0 locally uniformly on ${\mathbb C}\setminus[-1,1]$, which completes the proof by taking into account that 
\[
\phi(x)=\frac{2}{z-z^{-1}}.
\]
\end{proof}
We can give a continued fraction version of the above theorem.

\begin{corollary} Let $a_k\in{\mathbb{R}}\setminus[-1,1]$ and let the corresponding $c_k$ satisfy
\[
\sum_{k=1}^{\infty}(1-|c_k|)=\infty.
\]
Then
\begin{equation}\phi(x)=\frac{1}
{\frac{a_1x-1}{\sqrt{a_1^2-1}}-
 \displaystyle{\frac{(x-a_1)^2/(a_1^2-1)} {\frac{a_1x-1}{\sqrt{a_1^2-1}}+\frac{a_2x-1}{\sqrt{a_2^2-1}}-
\displaystyle{\frac{(x-a_2)^2/(a_2^2-1)}{{\frac{a_2x-1}{\sqrt{a_2^2-1}}+\frac{a_3x-1}{\sqrt{a_3^2-1}}-
\displaystyle{\frac{(x-a_3)^2/(a_3^2-1)}{\ddots
}}}}}}}},
\end{equation}
that is the continued fraction converges and it converges to $\phi(x)$ locally uniformly on ${\mathbb C}\setminus[-1,1]$.
\end{corollary}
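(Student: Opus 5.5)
The plan is to combine two earlier results. The first is the corollary giving the finite continued fraction \eqref{ratCF} for $U_{n-1}/T_n$. The second is Theorem~\ref{thm:ConvergenceRealNodes}, which gives locally uniform convergence of $U_{n-1}/T_n$ to $\phi$ under the divergence condition $\sum(1-|c_k|)=\infty$.

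The key point is that the convergents of the infinite continued fraction in the statement are exactly the ratios $U_{n-1}/T_n$. Once that is established, the corollary follows immediately from the theorem.

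\textbf{Step 1: the recurrence.} By the earlier three-term recurrences \eqref{3termForT} and \eqref{3termForU}, $T_n$ and $U_{n-1}$ satisfy the same recurrence. In the real case the middle coefficient can be rewritten as
\[
\left(\frac{a_nx-1}{\sqrt{a_n^2-1}}+\frac{a_{n+1}x-1}{\sqrt{a_{n+1}^2-1}}\right),
\]
which matches the form in Theorem~\ref{thm::RecurrenceRelation}. The two solutions differ only in their initial conditions, $T_0=1$, $T_1=(1-a_1x)/(x-a_1)$ and $U_{-1}=0$, $U_0=-\sqrt{a_1^2-1}/(x-a_1)$.

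\textbf{Step 2: normalization to the standard form.} Divide the recurrence by the coefficient of the $(n+1)$-st term, or equivalently rescale $\tilde T_n=T_n\prod_{k\le n}(x-a_k)/\sqrt{a_k^2-1}$ and similarly for $U$. After this rescaling the recurrence takes the standard form
\[
y_{n+1}=b_n(x)\,y_n-\alpha_n(x)\,y_{n-1},
\qquad
b_n(x)=\frac{a_nx-1}{\sqrt{a_n^2-1}}+\frac{a_{n+1}x-1}{\sqrt{a_{n+1}^2-1}},
\qquad
\alpha_n(x)=\frac{(x-a_n)^2}{a_n^2-1}.
\]
Here the signs must be tracked carefully, since the leading coefficient $(x-a_{n+1})/\sqrt{a_{n+1}^2-1}$ changes sign with the rescaling convention.

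\textbf{Step 3: identify the convergents.} By the standard theory of continued fractions (numerators and denominators of convergents satisfy the three-term recurrence with two initial conditions, \cite{Baker}*{Section 4.1}), the $n$-th convergent is exactly the finite fraction in \eqref{ratCF}. This is precisely what the earlier corollary records. I take it as given, only checking that the first partial denominator $\frac{a_1x-1}{\sqrt{a_1^2-1}}$ and the numerators $(x-a_k)^2/(a_k^2-1)$ agree with the displayed infinite fraction, whose $n$-th truncation is \eqref{ratCF}.

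\textbf{Step 4: conclude.} Convergence of a continued fraction means convergence of its sequence of convergents. By Theorem~\ref{thm:ConvergenceRealNodes} this sequence, $U_{n-1}/T_n$, converges to $\phi$ locally uniformly on $\C\setminus[-1,1]$.

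One point needs care. The convergents must be well defined, i.e.\ $T_n\ne0$ off $[-1,1]$. This holds because $f_n^2(z)=-1$ forces $|f_n(z)|=1$, which for a Blaschke product means $|z|=1$, i.e.\ $x\in[-1,1]$. The formula $U_{n-1}/T_n=\phi\,(f_n^2-1)/(f_n^2+1)$ then has no poles on $\C\setminus[-1,1]$, so the convergence is genuine there, not merely outside a set of poles.

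The main obstacle is the bookkeeping in Step 2, namely matching the sign and normalization conventions of the recurrence (including the signs of $\sqrt{a_k^2-1}$ for negative $a_k$) with those of the displayed fraction. The analytic content is entirely inherited from Theorem~\ref{thm:ConvergenceRealNodes}.
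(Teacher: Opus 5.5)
Your proposal is correct and follows the paper's route. The paper states this corollary without a separate proof, as the continued-fraction restatement of Theorem~\ref{thm:ConvergenceRealNodes} obtained by identifying the convergents with $U_{n-1}/T_n$ via \eqref{ratCF}. One small fix in Step 2: to get $y_{n+1}=b_ny_n-\alpha_ny_{n-1}$ with $\hat T_1=(a_1x-1)/\sqrt{a_1^2-1}$ and $\hat U_0=1$, rescale by $\prod_{k\le n}(a_k-x)/\sqrt{a_k^2-1}$ rather than $\prod_{k\le n}(x-a_k)/\sqrt{a_k^2-1}$, since the latter flips the sign of $b_n$ and of the initial data; this does not change the ratio or the conclusion.
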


We can take the convergence beyond the Newtonian interpolating scheme and prove it for a triangular sequence of nodes. However, before we can proceed we need to make one observation.

\begin{remark}
Note that having $a_k=1$ or $a_k=-1$ for some $k$ does not create an issue for the existence of $T_n$ or $U_n$ as it simply means that the corresponding Blaschke product in $f_n$ becomes 
$\pm 1$. Hence, it simply reduces the order of the polynomial and the number of interpolating conditions that the ratio has. As result, when a sequence of interpolating nodes has $\pm 1$ we just disregard them as they do not produce interpolating conditions. 
\end{remark}

\begin{theorem}
Let $a_{n,k}$ be the n-th roots of unity.
Then the multipoint Pad\'e approximant $\dfrac{U_{n-1}}{T_{n}}$ converges to $\phi$ locally uniformly on ${\mathbb C}\setminus[-1,1]$.
\end{theorem}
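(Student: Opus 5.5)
The plan is to reuse the argument of Theorem~\ref{thm:ConvergenceRealNodes}, but for the triangular scheme $a_{n,k}=e^{2\pi i k/n}$, $k=1,\dots,n$. By the preceding remark, the nodes $\pm1$ are discarded. The starting point is the identity used there, which comes straight from \eqref{ChebRat_c}:
\[
\frac{U_{n-1}(x)}{T_n(x)}-\phi(x)=-\phi(x)\,\frac{2}{f_n^2(z)+1},\qquad x=\tfrac12(z+z^{-1}),\ |z|>1 .
\]
Here $f_n$ is now built from $c_{n,k}=1/z(a_{n,k})$. Since $\phi$ is bounded on compact subsets of $\mathbb C\setminus[-1,1]$, it suffices to show that $|f_n(z)|\to\infty$ uniformly on every annulus $1<\rho\le |z|\le R$. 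Compacts in $x$ correspond to such annuli in $z$.

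\textbf{Step 1: $f_n$ is a genuine Blaschke-type product.} The non-real roots of unity come in conjugate pairs $a,\bar a$. Since $z(\bar a)=\overline{z(a)}$, the corresponding $c$'s are $c,\bar c$. Hence each pair contributes
\[
\frac{(z-c)(z-\bar c)}{(1-cz)(1-\bar c z)}=\frac{z-c}{1-\bar c z}\cdot\frac{z-\bar c}{1-cz},
\]
so $f_n$ equals the honest finite Blaschke product $\prod_k \frac{z-c_{n,k}}{1-\overline{c_{n,k}}z}$. This matters because the paper's $f_n$ uses $1-c_kz$ rather than $1-\bar c_k z$. In particular $|f_n(z)|>1$ for $|z|>1$, and $f_n^2+1\neq0$ there.

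\textbf{Step 2: a lower bound for $\log|f_n|$.} Put $w=1/z$, so $|w|\le 1/\rho<1$. Write $b_c(w)=\frac{w-c}{1-\bar c w}$. Then $|(z-c)/(1-\bar c z)|=1/|b_{\bar c}(w)|$ up to a unimodular factor. Using the standard identity $1-|b_c(w)|^2=\frac{(1-|c|^2)(1-|w|^2)}{|1-\bar c w|^2}$ and the inequality $-\log t\ge \tfrac12(1-t^2)$ for $0<t\le 1$, I get
\[
\log|f_n(z)|\ \ge\ \frac{1-\rho^{-2}}{8}\sum_{k}(1-|c_{n,k}|^2).
\]
So it remains to show that $S_n=\sum_k (1-|c_{n,k}|)\to\infty$.

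\textbf{Step 3: the main point is that $S_n\to\infty$.} This is the only place where the geometry of the nodes enters, and it is the main obstacle, since there is no fixed sequence here to feed into Theorem~\ref{thm:ConvergenceRealNodes}. The function $\alpha\mapsto |z(e^{i\alpha})|=|e^{i\alpha}+\sqrt{e^{2i\alpha}-1}|$ is continuous. It is strictly larger than $1$ whenever $e^{i\alpha}\notin[-1,1]$, i.e.\ for $\alpha\notin\pi\mathbb Z$. Therefore, on the arc $\alpha\in[\pi/4,3\pi/4]$ it is bounded below by some $1+\eta$ with $\eta>0$. So each node on that arc has $1-|c_{n,k}|\ge \delta:=1-(1+\eta)^{-1}>0$. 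The arc contains at least $n/4-1$ of the $n$-th roots of unity, which gives $S_n\ge \delta(n/4-1)\to\infty$.

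Combining this with Step 2, $\frac{2}{f_n^2+1}\to0$ uniformly on each annulus. Hence $U_{n-1}/T_n\to\phi$ locally uniformly on $\mathbb C\setminus[-1,1]$.
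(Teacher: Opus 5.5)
Your proof is correct and takes the same route as the paper: pair conjugate nodes so that $f_n$ becomes a genuine finite Blaschke product in $w=1/z$, bound $|1/f_n|$ by $\exp\bigl(-C\sum_k(1-|c_{n,k}|)\bigr)$ on compact subsets, and show that this sum diverges because the roots of unity fill the circle. Your Steps 2 and 3 make explicit two points the paper leaves vague: the constant $C=(1-\rho^{-2})/8$, and the ``the $c_{n,k}$ fill in the curve $|z+1/z|=2$'' claim, which you replace by counting the roots of unity on the arc $[\pi/4,3\pi/4]$.
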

\begin{proof}
As before, we have that
\[
\frac{U_{n-1}(x)}{T_n(x)}=\frac{2}{z-z^{-1}}\left(1-\frac{2}{f^2_n(z)+1}\right).
\]
Note that, $f_n$ is in fact a finite Blaschke product since the corresponding set $\{c_{n,k}\}$ contains complex conjugate pairs in the product and therefore can be rearranged into a finite Blaschke product:
\[
f_n(w)=\prod_{k=1}^{n} b_{n,k}(w), \quad b_{n,k}(w)=\frac{w-c_{n,k}}{1-\overline{c}_{n,k}w},
\]
where $w=1/z\in{\mathbb D}$. 
Next, we have
\[
|f_n(w)|\le\exp\left(-\sum_{k=1}^{n}(1-|b_{n,k}(w)|)\right).
\]
At the same time, we have that
\[
1-|b_{n,k}(w)|\ge C(1-|c_{n,k}|)
\]
on a closed subset of ${\mathbb D}$. Since the roots of unity fill in the unit circle in the limit, the numbers $c_{n,k}$ fill in the corresponding part of the curve $|z+1/z|=2$. As a result, we have
\[
\lim_{n\to\infty}\sum_{k}^{n}(1-|c_{n,k}|)=\infty,
\]
which yields $f_n\to 0$. Then returning to $z$, we get that $f_n(z)\to\infty$ when $|z|>1$ and is contained in a compact set.
\end{proof}

\section{Convergence in the presence of noise.}
\label{sec::EffectOfNoise}

The presence of noise in the interpolation data affects the convergence properties of Pad\'e and multi-point Pad\'e approximants.
We have performed the following numerical experiments, inspired by those in \cite{Costin:2022hgc} for single-point Pad\'e approximants, and we observe qualitatively similar results. 
Random noise, with a controlled strength, is added to the input interpolation data to analyze the effect on the multi-point Pad\'e approximants for the function $\phi(x)=1/\sqrt{x^2 - 1}$ in \eqref{eq::phi}.

Let $r_i,r_i' \in \C $ be complex random variables uniformly distributed in a square centered at the origin with side length 2.
Using Kronecker's algorithm \cite{Baker} implemented in Mathematica, the $[n-1/n]$ approximants were calculated for various configurations of interpolation nodes $\{a_1,a_1,\dots,a_n,a_n\}\in \C\setminus[-1,1]$ for the function $\phi_{\epsilon}(x)$ defined by the input  interpolation data:
\begin{equation}\label{eq::NoisyInterpolationProblem}
    \begin{aligned}
        \phi_\epsilon(a_i) &= \phi(a_i) + \epsilon\, r_i ,
    \\    \phi_\epsilon'(a_i) &= \phi'(a_i) + \epsilon\, r_i' .
    \end{aligned}
\end{equation}
The level of noise can be controlled by varying the small parameter $\epsilon$.

The breakdown of multipoint Pad\'e approximants found here is analogous to the breakdown of Pad\'e approximants to series which was analyzed in \cite{Costin:2022hgc}. We find that at a certain threshold strength of the noise the multipoint Pad\'e approximants break down due to the appearance of spurious pairs of poles and zeros (or Froissart doublets).  
For a fixed set of interpolation nodes and for $\epsilon$ sufficiently small, the approximants have poles and zeros indistinguishable from the unperturbed multi-point Pad\'e approximant.
As $\epsilon$ increases,  there is a threshold multipoint Pad\'e order $n_c$ at which spurious poles and zeros appear. Furthermore, numerical experiments indicate that the breakdown order is proportional to $\log_{10} \epsilon$. This is consistent with \cite{Costin:2022hgc} for single-point Pad\'e for series extrapolation, where the logarithmic behavior was predicted using the asymptotic properties of the approximate conformal map that single-point Pad\'e effectively generates.
 See Figure \ref{fig::Breakdown}, which includes estimates of the breakdown for two triangular interpolation schemes.
The first set is given by points on the real line, which lie in the interval $[2,4]$, and are given by $a_i \in \left\{2+2m/(n-1)\right\}_{i=0}^{n-1}$ for each $n$. The second set is given by the $n$ roots of $(2x/3)^n = 1$ which lie on the circle of radius $3/2$.
\begin{figure}
    \includegraphics[width=\textwidth]{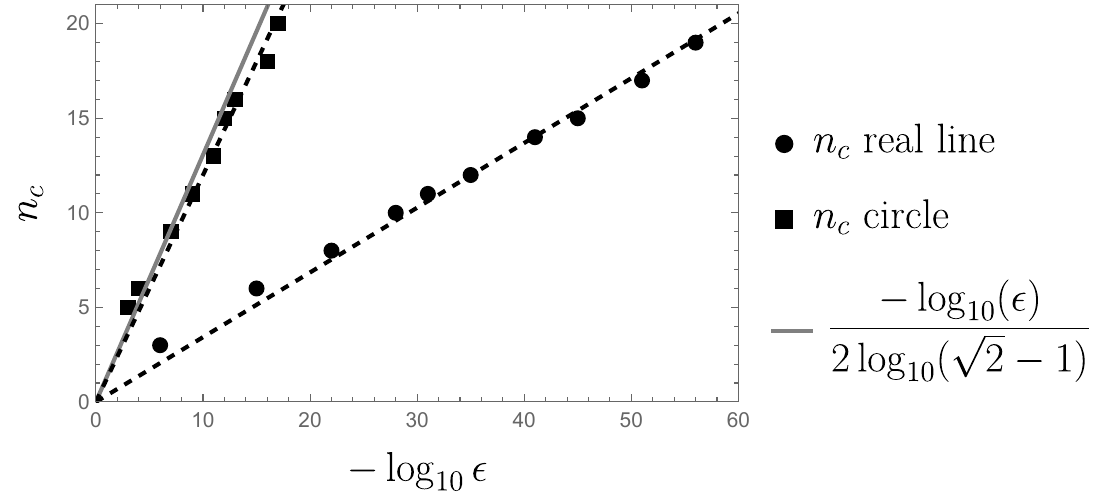}
    \caption{
        This plot shows the breakdown Pad\'e order $n_c$ for the function $\phi(x)=1/\sqrt{x^2 -1}$, as a function of the noise strength $\epsilon$, for two different configurations of interpolation nodes. 
        The black squares are for nodes distributed evenly around the circle with radius $3/2$, and the black circles are for nodes distributed evenly on the interval $[2,4]$ of the real line. 
        Dashed lines show fits to these two sets of threshold points, indicating logarithmic growth as in \cite{Costin:2022hgc}. 
        The solid line shows the result for single-point Pad\'e predicted by \cite{Costin:2022hgc} for the series case of the Pad\'e approximant, in which series data was provided at a single point. 
        Notice that the slope for the circular configuration of interpolation nodes matches very closely to the slope for single-point Pad\'e predicted by \cite{Costin:2022hgc}.}
    \label{fig::Breakdown}
\end{figure}

%
%

An important new feature of the multi-point Pad\'e interpolation noise threshold is that the slope of the logarithmic behavior depends on the distribution of the input interpolation nodes.  This can be seen in  Figure \ref{fig::Breakdown}, and the effect of different nodal distributions is also illustrated for various other configurations of input nodes in Figures  \ref{fig::RealLineBreakdown}, \ref{fig::ImAxisBreakdown}, \ref{fig::SymImAxisBreakdown} and \ref{fig::CircleBreakdown}. Figure  \ref{fig::RealLineBreakdown} shows the case where the interpolation nodes (hollow circles) are placed on the real line on one side of the natural cut $[-1, 1]$, where in the presence of very weak noise the Pad\'e poles (solid circles) and zeros (hollow triangles) accumulate. But as the level of noise increases the Pad\'e poles and zeros migrate to the vicinity of the interpolation points.
Figure  \ref{fig::ImAxisBreakdown} shows the case where the interpolation nodes (hollow circles) are placed on the imaginary axis on one side of the natural cut $[-1, 1]$, where in the presence of very weak noise the Pad\'e poles and zeros accumulate. Note that the Pad\'e poles form a cut that is curved outwards on the other side of the real line. Once again, as the level of noise increases the Pad\'e poles and zeros migrate to the vicinity of the interpolation points on the imaginary axis. In Figure \ref{fig::SymImAxisBreakdown} the interpolation nodes are placed on the imaginary axis symmetrically about the real axis. With weak noise the Pad\'e poles and zeros accumulate on a straight line cut $[-1, 1]$, in contrast to the asymmetric curved cut in Figure  \ref{fig::ImAxisBreakdown}, and as the noise level increases the Pad\'e poles and zeros accumulate to regions near the two distinct regions of interpolation nodes on the positive and negative imaginary axis. In Figure \ref{fig::CircleBreakdown} the input nodes are placed symmetrically around a circle centered on the origin and with radius $3/2$. This circular  configuration of input interpolation nodes is less susceptible to noise than those in Figures  \ref{fig::RealLineBreakdown}, \ref{fig::ImAxisBreakdown} and \ref{fig::SymImAxisBreakdown}.

 The behaviors illustrated in Figures  \ref{fig::RealLineBreakdown}, \ref{fig::ImAxisBreakdown}, \ref{fig::SymImAxisBreakdown} and \ref{fig::CircleBreakdown} are representative of a generic feature of the Pad\'e  pole and zero distributions when subject to noise: the spurious poles and zeros eventually accumulate in the vicinity of the interpolation nodes. This should be contrasted with the Pad\'e analysis of series, where the spurious poles and zeros accumulate to circular natural boundaries associated with the actual singularities \cite{Costin:2022hgc}.

\begin{figure}
    \begin{subfigure}[a]{0.45\textwidth}
        \includegraphics[width=\textwidth]{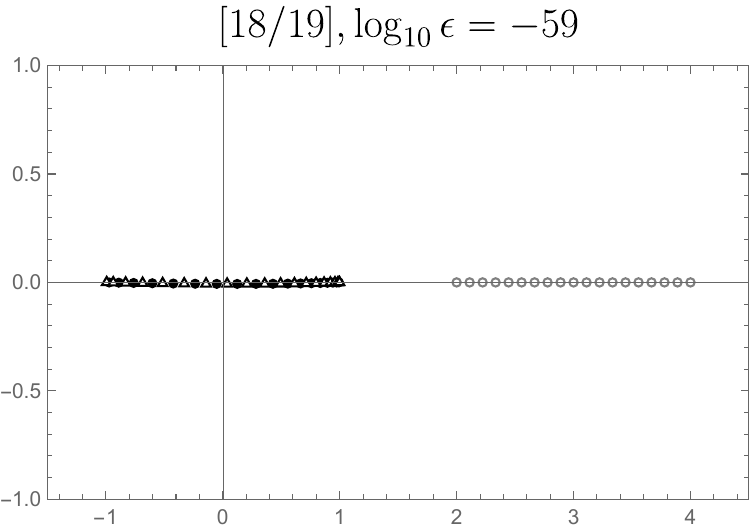}    
        \caption{}
    \end{subfigure}
    \begin{subfigure}[a]{0.45\textwidth}
        \includegraphics[width=\textwidth]{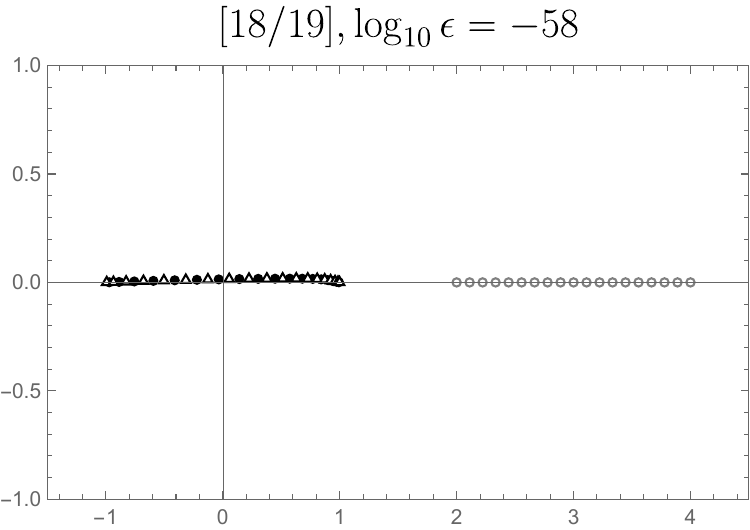}   
        \caption{} 
    \end{subfigure}
    \begin{subfigure}[a]{0.45\textwidth}
        \includegraphics[width=\textwidth]{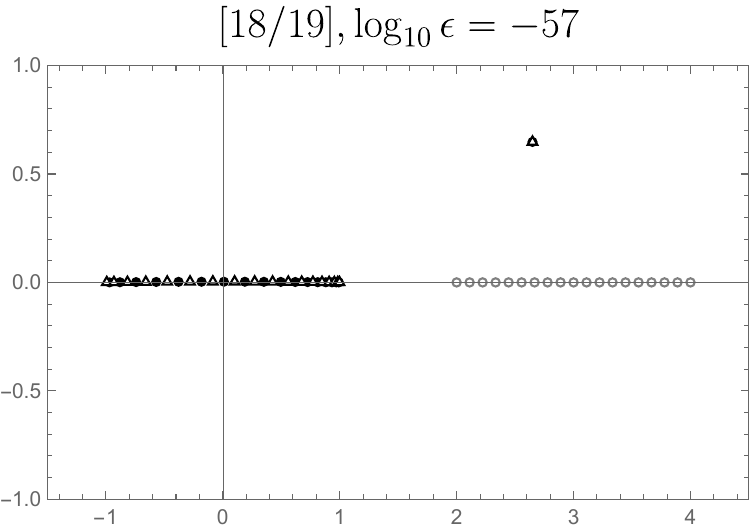}    
        \caption{}
    \end{subfigure}
    \begin{subfigure}[a]{0.45\textwidth}
        \includegraphics[width=\textwidth]{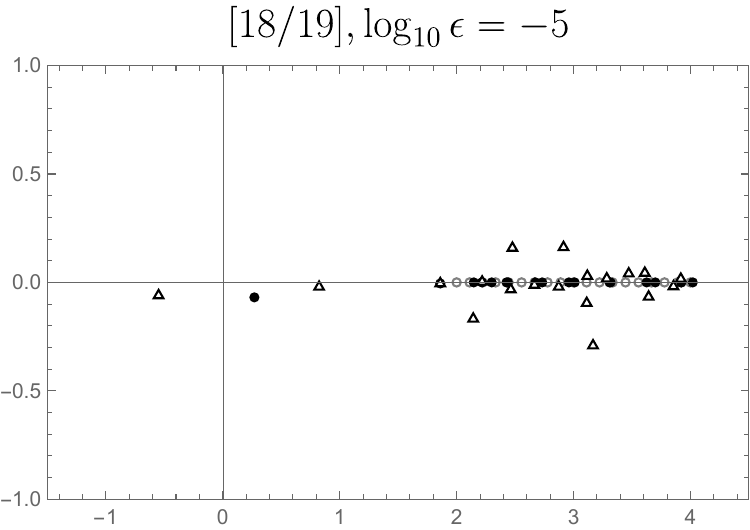}    
        \caption{}
    \end{subfigure}
   
    \caption{ The plots show the zeros (open triangles $\mathbf{\Delta}$) and poles (solid circles $\bullet$) of the type $[18/19]$ multipoint Pad\'e approximant for the interpolation problem \eqref{eq::NoisyInterpolationProblem}, with the input interpolation nodes $a_i$ placed on the positive real axis, as indicated by the open circles {\color{gray} $\circ$}. The first spurious poles appear with noise level $\epsilon\approx 10^{-57}$, and note that as the noise level increases the Pad\'e poles and zeros migrate to the vicinity of the input interpolation nodes. }
    \label{fig::RealLineBreakdown}
\end{figure}

\begin{figure} 
    \begin{subfigure}[a]{0.45\textwidth}
        \includegraphics[width=\textwidth]{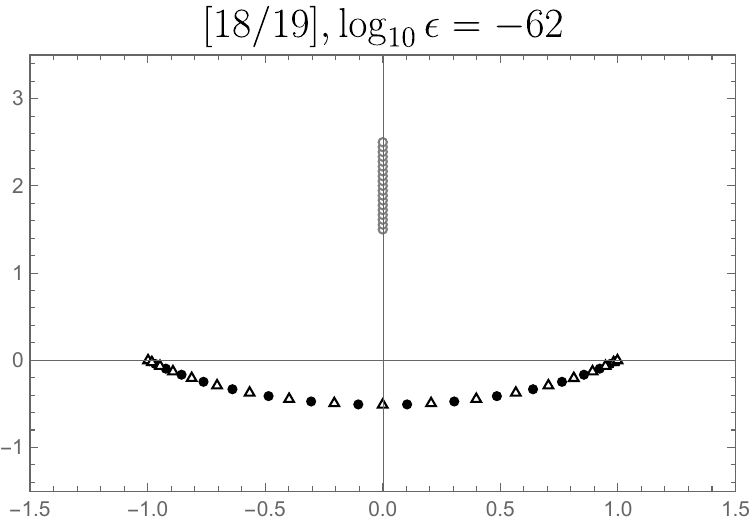}    
        \caption{}
    \end{subfigure}
    \begin{subfigure}[a]{0.45\textwidth}
        \includegraphics[width=\textwidth]{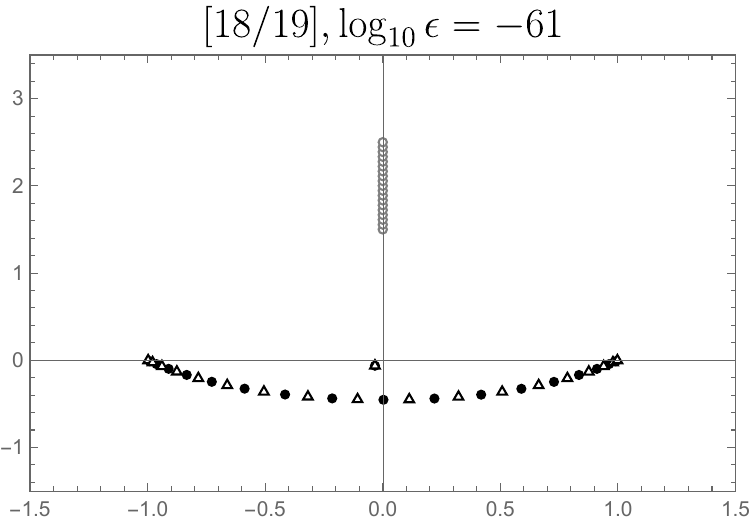}   
        \caption{} 
    \end{subfigure}
    \begin{subfigure}[a]{0.45\textwidth}
        \includegraphics[width=\textwidth]{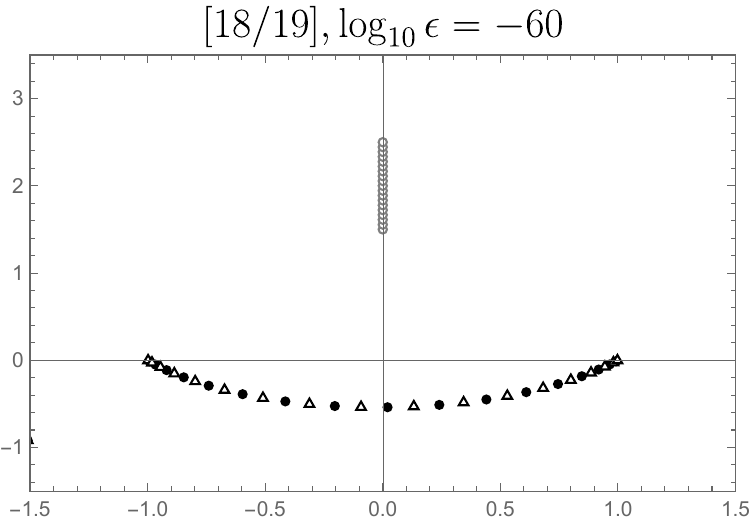}    
        \caption{}
    \end{subfigure}
    \begin{subfigure}[a]{0.45\textwidth}
        \includegraphics[width=\textwidth]{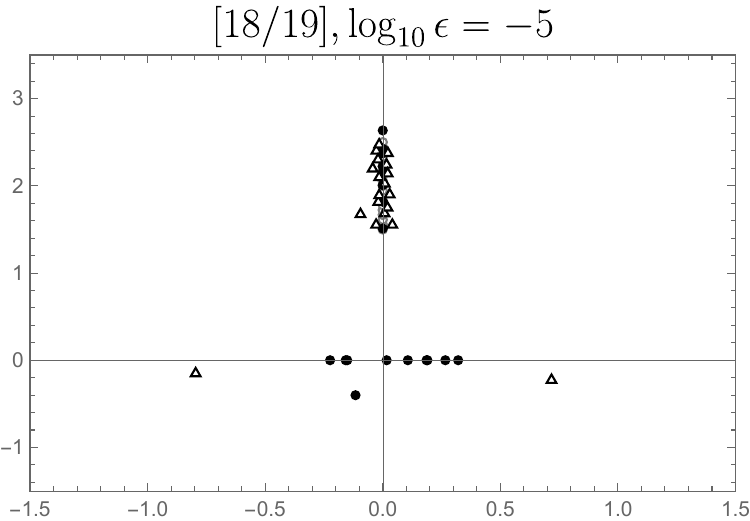}    
        \caption{}
    \end{subfigure}
  
    \caption{
    The plots show the zeros (open triangles $\mathbf{\Delta}$) and poles (solid circles $\bullet$) of the type $[18/19]$ multipoint Pad\'e approximant for the interpolation problem \eqref{eq::NoisyInterpolationProblem}, with the input interpolation nodes $a_i$ placed on the positive imaginary axis, as  indicated by the open circles {\color{gray} $\circ$}. The first spurious poles appear with noise level $\epsilon\approx 10^{-61}$, and note that as the noise level increases the Pad\'e poles and zeros migrate to the vicinity of the input interpolation nodes. The curved arc of low-noise Pad\'e poles and zeros reflects the asymmetry of the input nodes relative to the natural cut.}
    \label{fig::ImAxisBreakdown}
\end{figure}

\begin{figure}
    \begin{subfigure}[a]{0.45\textwidth}
        \includegraphics[width=\textwidth]{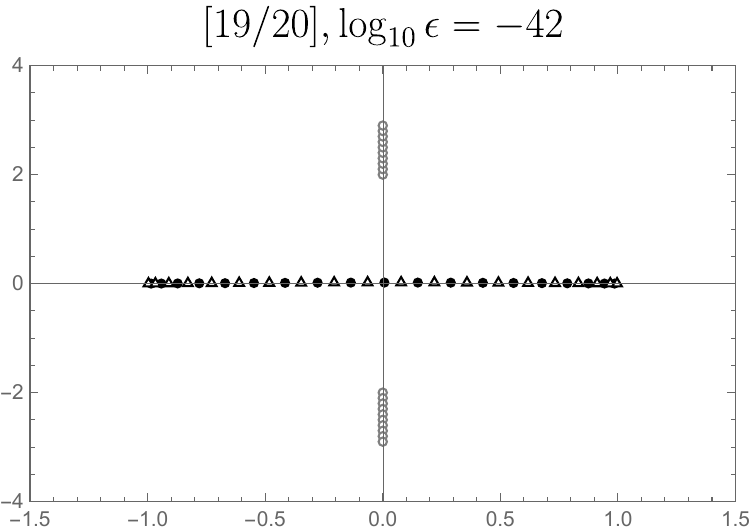}    
        \caption{}
    \end{subfigure}
    \begin{subfigure}[a]{0.45\textwidth}
        \includegraphics[width=\textwidth]{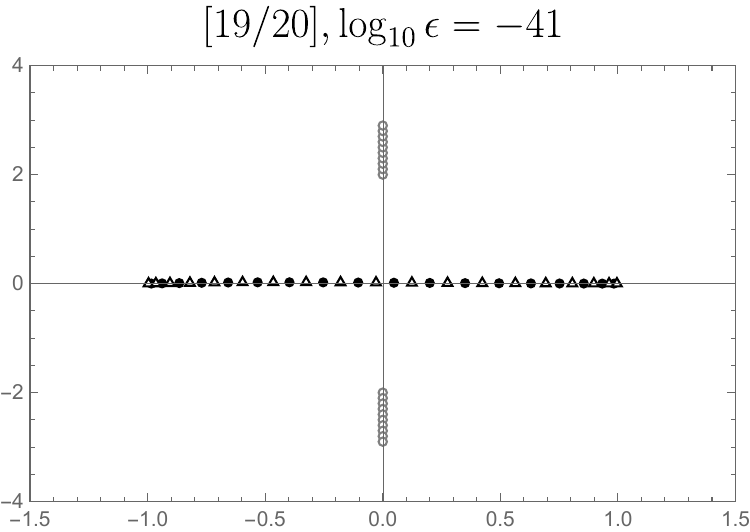}   
        \caption{} 
    \end{subfigure}
    \begin{subfigure}[a]{0.45\textwidth}
        \includegraphics[width=\textwidth]{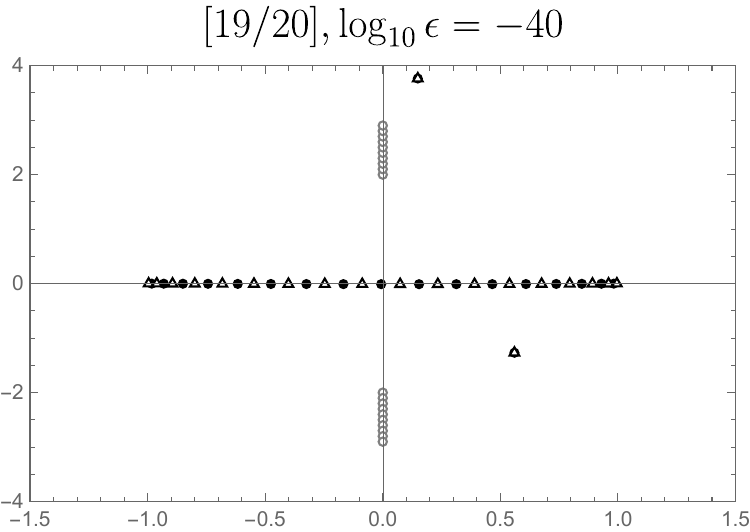}    
        \caption{}
    \end{subfigure}
    \begin{subfigure}[a]{0.45\textwidth}
        \includegraphics[width=\textwidth]{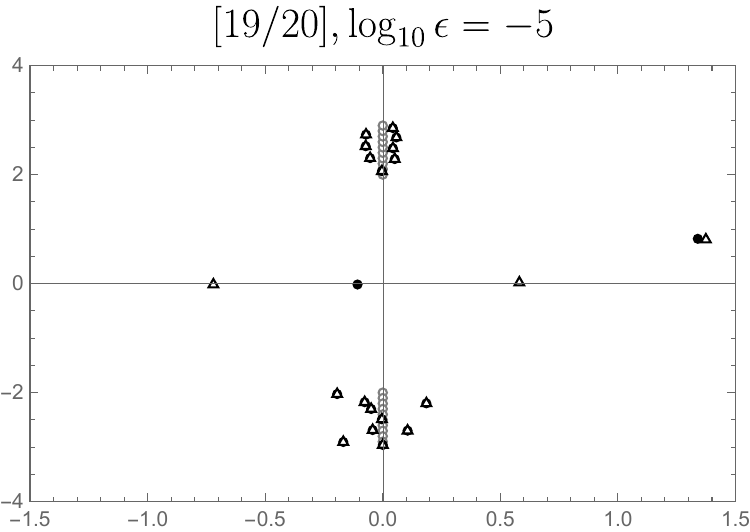}    
        \caption{}
    \end{subfigure}
   
    \caption{ 
 The plots show the zeros (open triangles $\mathbf{\Delta}$) and poles (solid circles $\bullet$) of the type $[19/20]$ multipoint Pad\'e approximant for the interpolation problem \eqref{eq::NoisyInterpolationProblem}  with the input interpolation nodes $a_i$ placed symmetrically on the positive and negative imaginary axis, as  indicated by the open circles {\color{gray} $\circ$}. 
 The first spurious poles appear with noise level $\epsilon\approx 10^{-40}$, and note that as the noise level increases the Pad\'e poles and zeros migrate to the vicinity of the input interpolation nodes. 
 The low-noise Pad\'e poles and zeros form a straight line, rather than curved as in Figure \ref{fig::ImAxisBreakdown}, reflecting the symmetry of the input nodes relative to the natural cut.
    \label{fig::SymImAxisBreakdown}}
\end{figure}

\begin{figure}
    \begin{subfigure}[a]{0.45\textwidth}
        \includegraphics[width=\textwidth]{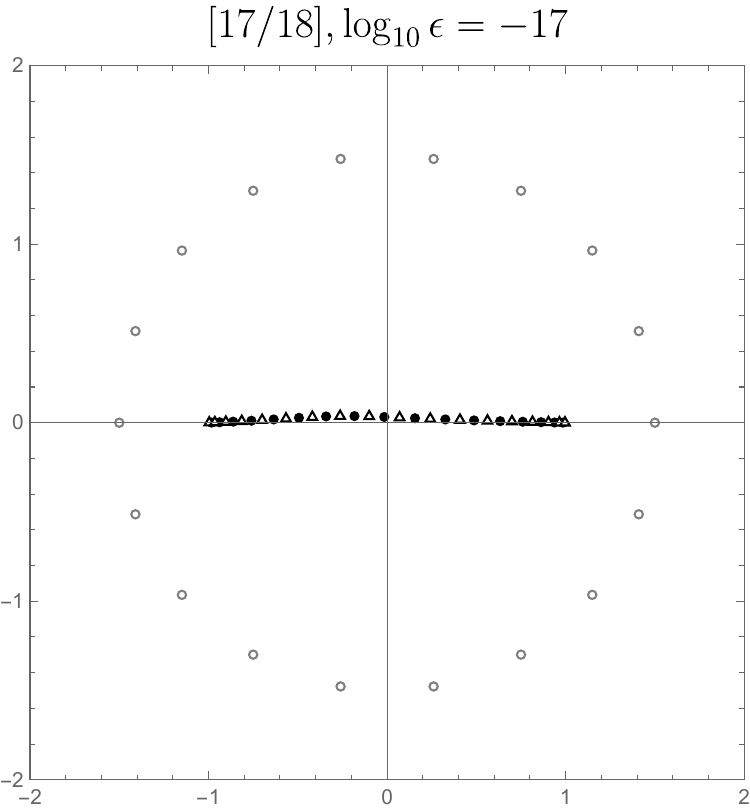}    
        \caption{}
    \end{subfigure}
    \begin{subfigure}[a]{0.45\textwidth}
        \includegraphics[width=\textwidth]{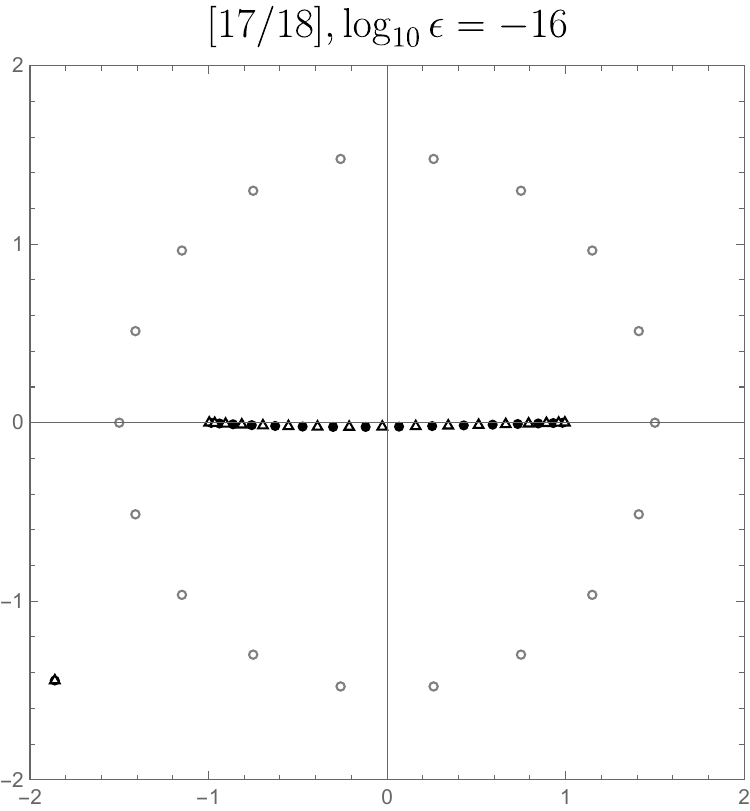}   
        \caption{} 
    \end{subfigure}
    \begin{subfigure}[a]{0.45\textwidth}
        \includegraphics[width=\textwidth]{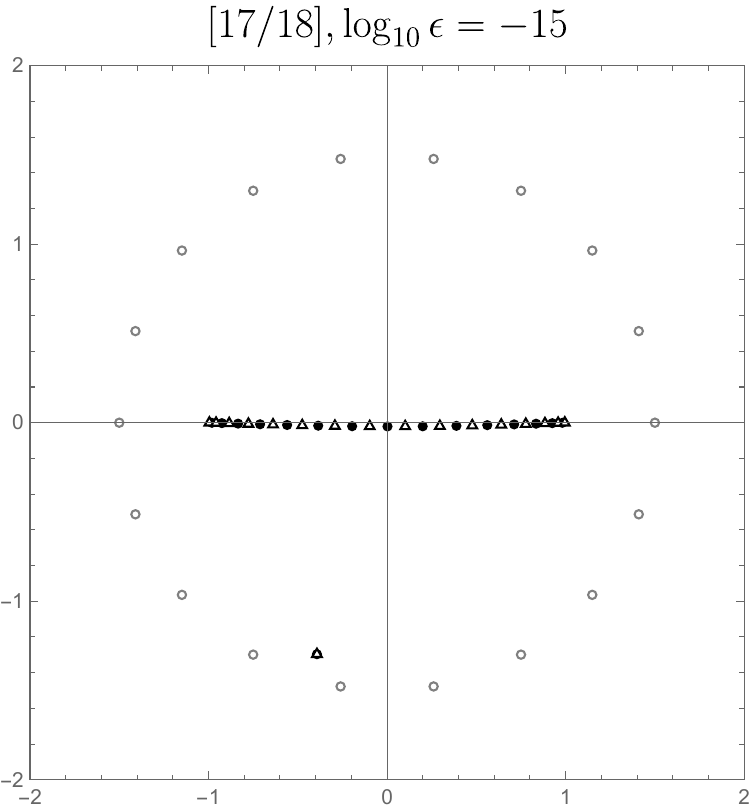}    
        \caption{}
    \end{subfigure}
    \begin{subfigure}[a]{0.45\textwidth}
        \includegraphics[width=\textwidth]{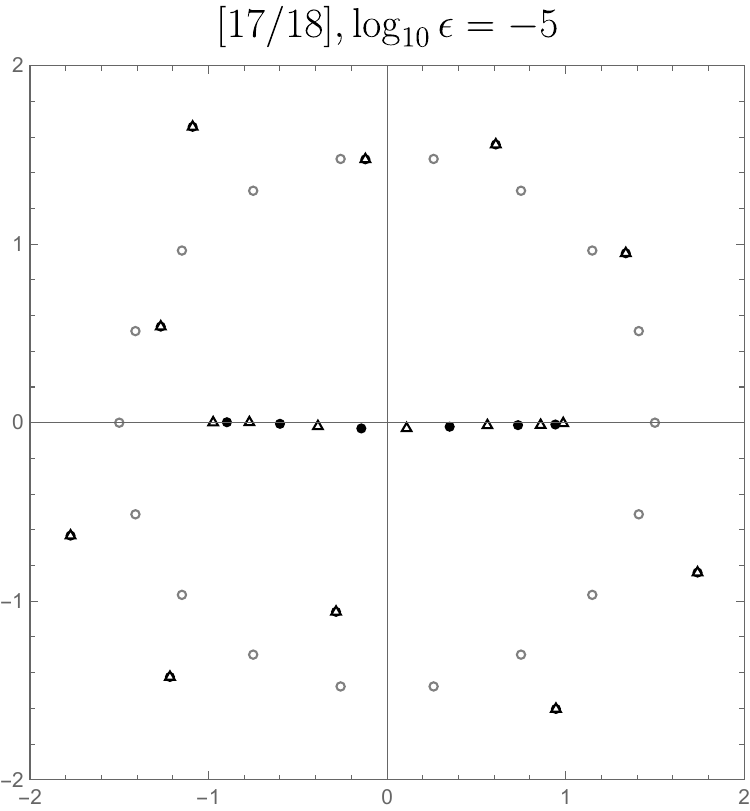}    
        \caption{}
    \end{subfigure}
   
    \caption{
 The plots show the zeros (open triangles $\mathbf{\Delta}$) and poles (solid circles $\bullet$) of the type $[19/20]$ multipoint Pad\'e approximant for the interpolation problem \eqref{eq::NoisyInterpolationProblem} with the input interpolation nodes $a_i$ placed symmetrically around a circle of radius $3/2$, as indicated by the open circles {\color{gray} $\circ$}. 
 The first spurious poles appear with noise level $\epsilon\approx 10^{-15}$, and note that as the noise level increases the Pad\'e poles and zeros migrate to the vicinity of the input interpolation nodes, with some remaining near the cut. The low-noise Pad\'e poles and zeros form a straight line, reflecting the symmetry of the input nodes relative to the natural cut. 
    \label{fig::CircleBreakdown}
    }
\end{figure}

\section{Application to analytic continuation in physics}

The applicaiton of Pad\'e aprpoximants to lattice QCD has received recent attention, see for example \cites{Pasztor_2021,Dimopoulos_2022,PadeGminusTwo,PhysRevD.108.074516,clarke2024searchingqcdcriticalendpoint,Schmidt_2023,Gokce,Goswami:2024jlc}.
A primary reason for the interest in multipoint Pad\'e approximants coming from this community is due to the well documented ability of Pad\'e approximants to analytically continue functions when given only partial information such as a number of series coefficients or interpolation conditions.
As discussed in \cite{Schmidt_2023}, the `sign-problem' (for a review which discusses the `sign problem' see \cite{Nagata_2022}) makes it impossible to perform lattice calculations at real (and physically relevant) values. 
Instead, calculations are done (at some finite number of) points lying on the imaginary axis.
The extraction of physically relevant content---such as the locations of the singularities lying on the real axis---requires analytic continuation.

In this section we provide a rigourous investigation and discussion of the such analytic continuation properties of the function $\phi(x) = \frac{1}{\sqrt{x^2-1}}$ whose multipoint Pad\'e approximants are studied in this article.
To complement the discussion of noise, especially the experiment illustrated in Figure \ref{fig::ImAxisBreakdown}, (which is similar to the scenario relevant to lattice calculations such as described in \cite{Schmidt_2023}), a discussion of the error along the real axis given interpolation nodes placed uniformly in an interval along the imaginary axis is provided.

Firstly, we remark that multipoint Pad\'e can possess advantages over Pad\'e approximants of series data.
Along the real line the Pad\'e approximant to $\phi(x)$ at $x=0$ converges only on the interval $(-1,1)$.
However, the multipoint Pad\'e approximant found from interpolation conditions on the imaginary axis converges on $\mathbb{R}\setminus\{-1,1\} $.

This is illustrated in figure \ref{fig::ConvergenceOnRealLine} where the relative error, given by
\begin{equation}\label{eq::RelativeErrorFunction}
    \epsilon_n(x) =  |\phi(x) - \frac{U_{n-1}(x)}{T_{n}(x)} |/|\phi(x)|,
\end{equation}  
is shown for interpolation schemes of the form
\begin{equation}\label{eq:InterpolationScheme}
    \begin{aligned}
        a_{m+1} &= i \frac{\pi}{n} m , && m = 0,2,\dots,n
    \end{aligned}
\end{equation}
which are similar to those discussed in lattice QCD applications (for example \cite{clarke2024searchingqcdcriticalendpoint}).

\begin{figure}
    \begin{subfigure}[a]{\textwidth}
        \includegraphics[width=\textwidth]{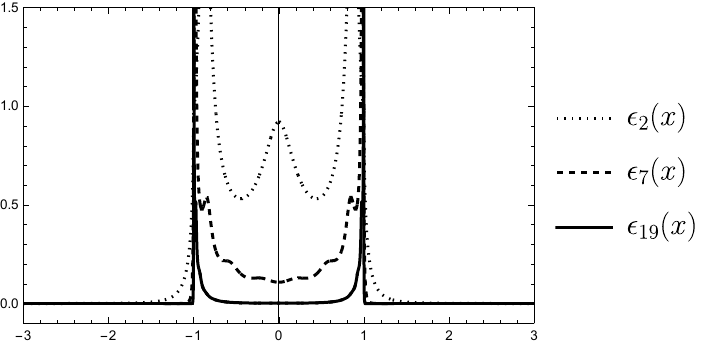}    
        \caption{\label{fig::a_ConvergenceOnRealLine}}
    \end{subfigure}
    \begin{subfigure}[a]{1\textwidth}
        \includegraphics[width=\textwidth]{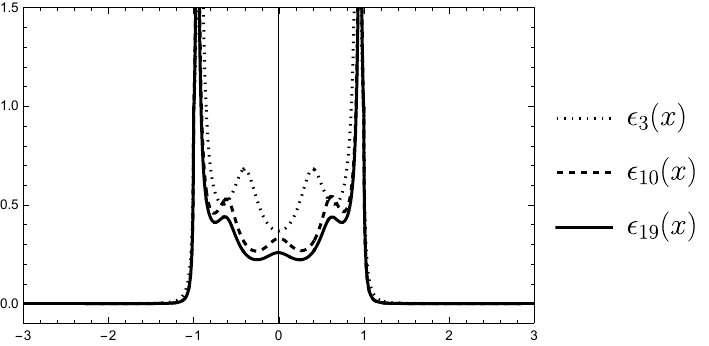}   
        \caption{\label{fig::b_ConvergenceOnRealLine}} 
    \end{subfigure}
    \caption{
    Figure \ref{fig::a_ConvergenceOnRealLine} shows the relative error function \ref{eq::RelativeErrorFunction} for three different numbers $n$ of interpolation nodes described by the interpolation scheme \ref{eq:InterpolationScheme}.
    Figure \ref{fig::b_ConvergenceOnRealLine} is identical to \ref{fig::a_ConvergenceOnRealLine} however, noise with strength $-\log_{10}\epsilon = 15$ was added to the Pad\'e approximant in an identical manner to what was done in section \ref{sec::EffectOfNoise}.
    The plots show fast convergence on $(-\infty ,-1)\cup (1,\infty)$ both with and without noise and slower convergence within the interval $(-1,1)$ without noise.
    With noise the convergence of the Pad\'e approximant halts, including additional interpolation conditions provides little gains in accuracy.
    \label{fig::ConvergenceOnRealLine}
    }
\end{figure}

\vspace{0.5cm}
\noindent {\bf Acknowledgments.} The authors thank Gerald Dunne for fruitful discussions and suggestions that improved the presentation of the manuscript. M.D.'s part of research was partially supported by the NSF DMS grant 2008844. 
M.M.'s part of the research was supported in part by the U.S. Department of Energy, Office of High Energy Physics, Award DE-SC0010339.

\bibliography{/Users/max/Library/CloudStorage/OneDrive-UniversityofConnecticut/ChebyshevFunctions/ChebyshevDraft/refs.bib}

\end{document}